\documentclass[preprint,review,10pt]{amsart}
\usepackage{amsmath,amssymb,amsfonts,xspace}
\newtheorem{theorem}{Theorem}[section]
\newtheorem{lemma}[theorem]{Lemma}

\theoremstyle{definition}
\newtheorem{definition}[theorem]{Definition}
\newtheorem{example}[theorem]{Example}

\newtheorem{proposition}[theorem]{Proposition}
\newtheorem{corollary}[theorem]{Corollary}

\theoremstyle{remark}
\newtheorem{remark}[theorem]{Remark}

\numberwithin{equation}{section}

\begin{document}

\title{ 2-$nil$ hyperideals of multiplicative hyperrings}

\author{Mahdi Anbarloei}
\address{Department of Mathematics, Faculty of Sciences,
Imam Khomeini International University, Qazvin, Iran.
}

\email{m.anbarloei@sci.ikiu.ac.ir}


\subjclass[2020]{  20N20, 16Y20  }


\keywords{ prime hyperideal,  2-$nil$ hyperideal, (2,$n$)-hyperideal}

\begin{abstract}
In this paper, we introduce and study the notion of 2-$nil$ hyperideals of a multiplicative hyperring. 

\end{abstract}
\maketitle

\section{Introduction}
The study of generalized versions of  prime ideals has been a prominent theme in commutative algebra.   The notion of   2-$nil$ ideals, introduced by Celikel, is one such generalization \cite{Celikel}. 


An  expansion of notion of groups called  hypergroups was proposed by a French mathematician  in 1934
Mathematicians  \cite{marty}.  Since then, this new notion  has   been widely investigated and developed by many authors \cite{f2,f4,f7,f9}.  A hyperoperation $``\circ" $ on non-empty set $H$ is a mapping from $H \times H$ into   the family of all non-empty subsets of $H$. In this case,  $(H,\circ)$ is called hypergroupoid.  Let $H_1,H_2 \subseteq H$ and $h \in H$, then $H_1 \circ H_2 =\cup_{h_1 \in H_1, h_2 \in H_2}h_1 \circ h_2,$ and $ H_1 \circ h=H_1 \circ \{h\}.$ This means that the hyperoperation $``\circ" $ on $H$ can be extended to  subsets of $H$. A hypergroupoid $(H, \circ)$ is called  a semihypergroup if $\cup_{h  \in h_2 \circ h_3}h_1 \circ h=\cup_{h^* \in h_1 \circ h_2} h^* \circ h_3 $ for all $h_1,h_2,h_3 \in H$ which means $\circ$ is associative. A semihypergroup $H$ is called a hypergroup if  $h \circ H=H=H\circ h$ for each  $h \in H$ \cite{f10}.  In 1982, Rota extended classical rings to multiplicative hyperrings in  \cite{f14}. This type of hyperstructurs  has been extensively studied in \cite{   Ghiasvand2, f16, ul}. Recall from \cite{f10} that an algebraic structure $(H,+,\circ)$ is a  commutative multiplicative hyperring if the following
holds: {\bf 1.} $(H,+)$ is an abelian  group,  {\bf 2.} $(H,\circ)$ is a semihypergroup; 
{\bf 3.} $h_1\circ (h_2+h_3) \subseteq h_1\circ h_2+h_1\circ h_3$ and $(h_2+h_3)\circ h_1 \subseteq h_2\circ h_1+h_3\circ h_1$ for each $h_1, h_2, h_3 \in H, $
{\bf 4.}  $h_1\circ (-h_2) = -(h_1\circ h_2)=(-h_1)\circ h_2$ for each $h_1, h_2 \in H$,
{\bf 5.} $h_1 \circ h_2 =h_2 \circ h_1$ for each $h_1,h_2 \in H$.  The multiplicative hyperring $H$ is called strongly distributive if in (3)  the equality holds. 
Assume that $(\mathbb{Z},+,\cdot)$ is the ring of integers. For each subset $\Psi$ of   $\mathbb{Z} $ with $\vert\Psi \vert \geq 2$, there exists a multiplicative hyperring $(\mathbb{Z}_\Psi,+,\circ)$ where $\mathbb{Z}_\Psi=\mathbb{Z}$ and  $h \circ g =\{h.\psi.g\ \vert \ \psi \in \Psi\}$ for every $h,g\in \mathbb{Z}_\Psi$ \cite{das}.
An element $1_H \in H$ is considered as an identity element if $h \in h\circ 1_H$ for every $h \in H$. An element $h \in H$ is called unit, if there exists $g \in H$ such that $e \in h \circ g$.  
Also, a multiplicative hyperring $H$ refers to a hyperfield if each non-zero element of $H$ is unit. A non-empty subset $P$ of  $H$ is a  hyperideal  if {\bf i.}  $p_1 - p_2 \in P$ for every $p_1, p_2 \in P$,  {\bf ii.} $h \circ p \subseteq A$ for every $p \in P$ and $h \in H$ \cite{f10}.
 A proper hyperideal $P$ in  $H$ is a prime hyperideal if $u \circ v \subseteq P$ for $u,v \in H$ implies either  $u \in P$ or $v \in P$ \cite{das}. For any given hyperideal $P$ of the multiplicative hyperring $H$, $\sqrt{P}$ denotes the intersection of all prime hyperideals of $H$ containing  $P$. If  $H$  has no prime hyperideal containing $P$, we assume $\sqrt{P}=H$. Suppose that  $\mathcal{C}(H)= \{h_1 \circ h_2 \circ \cdots \circ h_n \ \vert \ h_i \in H, n \in \mathbb{N}\} \subseteq P^{\ast }(H)$ and  $P$ is a hyperideal of $H$. $P$ refers to a $\mathcal{C}$-hyperideal of $H$ if  $ P \cap X \neq \emptyset$ for all $X \in \mathcal{C}(H)$ implies $X \subseteq P$.  Suppose that the hyperideal 
 $\{h \in H \ \vert \  h^n \subseteq P \ \text{for some} \ n \in \mathbb{N}\} $ is designated by ${\bf r}(P)$. Note that the inclusion ${\bf r}(P)  \subseteq \sqrt{P}$ always holds but other inclusion holds if $P$ is a $\mathcal{C}$-hyperideal of $H$ by Proposition 3.2 in \cite{das}. Furthermore, a hyperideal $P$ of $H$ is  a strong $\mathcal{C}$-hyperideal if $Y \cap P \neq \emptyset$ for each $Y \in \mathfrak{C}(H)$  implies  $Y \subseteq P$ such that $\mathfrak{C}(H)=\{\sum_{i=1}^n X_i \ \vert \ X_i \in \mathcal{C}(H), n \in \mathbb{N}\}$   \cite{phd}. 
  A proper hyperideal $M$ of  $H$ is maximal in $H$ if $M \subsetneq N \subseteq H$ for
each hyperideal $N$ of $H$ implies  $N = H$ \cite{ameri}. The intersection of all maximal hyperideals of $H$ is denoted by $J(H)$. 
 An element $h \in H$ is nilpotent, if   $0 \in h^n $ for some $n \in \mathbb{N}$  \cite{ameri}. 

A proper hyperideal $P$ of $H$ is called
\begin{enumerate}
\item[$\bullet$] 2-absorbing (primary) if $u \circ v \circ w \subseteq P$ for $u,v,w \in H$, then $u \circ v \subseteq P$ or  $u \circ v \subseteq P$ ($u \circ v \subseteq {\bf r}(P)$) or $u \circ v \subseteq P$ ($u \circ v \subseteq {\bf r}(P)$) \cite{Suzen}.
\item[$\bullet$] $n$-hyperideal  if whenever $u,v \in H$ with $u \circ v \subseteq P$ and $u \notin {\bf r}(0)$, then $v \in P$ \cite{Ugurlu}. 
\item[$\bullet$] (2,$n$)-hyperideal if whenever $u \circ v \circ w \subseteq P$ for some $u,v,w \in H$, then $u \circ v \subseteq P$ or   $u \circ v \subseteq {\bf r}(0)$  or    $u \circ v \subseteq {\bf r}(0)$ \cite{Ozel}.
\end{enumerate}

 In this paper, we intend to introduce  the notion  of  2-$nil$ hyperideals  in a commutative multiplicative hyperring.  Among many results in this paper, in Theorem \ref{11} we conclude that   taking the radical preserves the  2-$nil$-property. Proposition \ref{13} demonstrates that every 2-$nil$ hyperideal $P$ properly contained in its radical has the same radical as the zero hyperideal.
 In Theorem \ref{16}, we prove that the number of minimal prime hyperideals containing a 2-$nil$ strong $\mathcal{C}$-hyperideal can not exceed two. Theorem \ref{18} provides an equivalent characterization of the radical of the zero hyperideal in terms of 2-nil and 2-absorbing hyperideals.
 Moreover, several examples and arguments are provided to demonstrate the similarities
 and differences between 2-$nil$  hyperideals and the notions of prime hyperideals, 2-absorbing (primary) hyperideals, $n$-hyperideals and $(2,n)$-hyperideals.


\section{  2- $nil$ hyperideals}
\begin{definition}
Let $H$ be a commutative multiplicative hyperring and let $P$ be a  proper hyperideal  of $H$. We call $P$ a \it{2-$nil$ hyperideal} if whenever $u,v,w \in H$ with $u \circ v \circ w \subseteq P$, then $u \circ v \subseteq {\bf r}(0)$ or $u \circ w \subseteq P$ or $v \circ w \subseteq P$.
\end{definition}
From the definition, it follows that every prime hyperideal is 2-$nil$. Also, $n$-hyperideals are 2-$nil$.  
\begin{example}
\begin{enumerate}
\item Consider  the polynomial ring $\mathbb{Z}[x]$ where  $\mathbb{Z}$ is the ring of integers.  Set, $H=\mathbb{Z}+3x\mathbb{Z}[x]$ and define the hyperoperation $u \circ v =\{2uv,4uv\}$ for every $u,v \in \mathbb{Z}$. Then $P=3x\mathbb{Z}[x]$ is a 2-$nil$ hyperideal of $H$ as  $A=3x\mathbb{Z}[x]$ is a prime hyperideal of $G$ by Example 2.4 in \cite{Ghiasvand2}.

\item Consider the multiplicative hyperring $(\mathbb{Z}_4,+,\circ)$ where for all $u,v \in \mathbb{Z}_4$, $u+v$ is remainder of  $\frac{u+v}{4}$  and $u \circ v =\{\overline{uv},\overline{uv+4}\}$ (mod 4). Then $P=\{0,2\}$ is a 2-$nil$ hyperideal of $\mathbb{Z}_4$ since $P$ is a $n$-hyperideal by Example 5  in \cite{Ozel}.
\end{enumerate}
\end{example} 
Example \ref{K} demonstrates that not every 2-$nil$ hyperideal is prime. 
\begin{example} \label{K}
Consider the multiplicative hyperring $(\mathbb{Z}_6,+,\circ)$ where for all $u,v \in \mathbb{Z}_4$, $u+v$ is remainder of  $\frac{u+v}{6}$  and $u \circ v =\{\overline{uv},\overline{2uv},\overline{3uv}, \overline{4uv}, \overline{5uv}\}$ (mod 6). Then $P=\{0\}$ is a 2-$nil$ hyperideal of $\mathbb{Z}_6$ but $P$ is not prime since $2 \circ 3 \subseteq P$, $2 \notin P$ and $3 \notin P$.
\end{example}
The following example verifies that the concept of $n$-hyperideals and 2-$nil$ hyperideals are different.
\begin{example}
On $\mathbb{Z}$, define $u \circ v=\{uv,2uv,3uv,\ldots\}$. Then $3\mathbb{Z}$ is a 2-$nil$ hyperideal of $\mathbb{Z}$. However, $3\mathbb{Z}$ is not $n$-hyperideal since $6 \circ 5 \subseteq 3\mathbb{Z}$ and $6 \notin {\bf r}(0)$ but $5 \notin 3\mathbb{Z}$. 
\end{example}
\begin{remark} \label{K2}
 Let $P$ be a  hyperideal of $H$ such that $P \subseteq {\bf r}(0)$. If $P$ is a     2-$nil$ hyperideal  of $H$, then $P$ is a (2,$n$)-hyperideal.
\end{remark}
In the next example, we show that the condition $P \subseteq {\bf r}(0)$ in Remark \ref{K2} is crucial.
\begin{example}
On $\mathbb{Z}$, define $u \circ v =\{5uv,7uv\}$. Let $P=5 \mathbb{Z}$. Clearly, $P \nsubseteq {\bf r}(0)$. Although $P$ is a 2-$nil$ hyperideal of $\mathbb{Z}$, it is not a (2,$n$)-hyperideal. Because $1 \circ 1 \circ 5 \subseteq 5\mathbb{Z}$ but $1 \circ 1 \nsubseteq 5\mathbb{Z}$  and $1 \circ 5 \nsubseteq {\bf r}(0)$.
\end{example}
The following theorem establishes that the radical of  a 2-$nil$ $\mathcal{C}$-hyperideal of $H$ is also 2-$nil$.
\begin{theorem} \label{11}
Let $P$ be a proper $\mathcal{C}$-hyperideal of $H$. If $P$ is a 2-$nil$ hyperideal of $H$, then  so is ${\bf r}(P)$. 
\end{theorem}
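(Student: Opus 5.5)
We will show directly that ${\bf r}(P)$ meets the definition of a 2-$nil$ hyperideal. Properness is immediate: if ${\bf r}(P)=H$ then $1_H\in{\bf r}(P)$, hence $1_H^n\subseteq P$ for some $n$. Since $P$ is a $\mathcal{C}$-hyperideal, this forces $P=H$. (Alternatively, use ${\bf r}(P)=\sqrt{P}$ from Proposition 3.2 in \cite{das}: as $P$ is proper, some prime hyperideal contains it, so $\sqrt{P}\neq H$.)

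For the main step, let $u,v,w\in H$ with $u\circ v\circ w\subseteq {\bf r}(P)$. Since $P$ is a $\mathcal{C}$-hyperideal, ${\bf r}(P)=\sqrt{P}$ is an intersection of primes, and we want an $n$ with $(u\circ v\circ w)^n\subseteq P$. Pick any $x\in u\circ v\circ w$; then $x^n\subseteq P$ for some $n$. Using associativity and commutativity, $x^n\subseteq u^n\circ v^n\circ w^n$, and $u^n\circ v^n\circ w^n$ is an element of $\mathcal{C}(H)$. It therefore meets $P$, and the $\mathcal{C}$-property gives
\[
u^n\circ v^n\circ w^n\subseteq P .
\]
Now choose $a\in u^n$, $b\in v^n$, $c\in w^n$. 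Then $a\circ b\circ c\subseteq u^n\circ v^n\circ w^n\subseteq P$, so the 2-$nil$ property of $P$ gives one of three cases:
\begin{itemize}
\item $a\circ b\subseteq{\bf r}(0)$;
\item $a\circ c\subseteq P$;
\item $b\circ c\subseteq P$.
\end{itemize}

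The main obstacle is turning each case back into a statement about $u\circ v$, $u\circ w$ or $v\circ w$. Take the case $a\circ c\subseteq P$. Since $a\circ c\subseteq u^n\circ w^n$ and $u^n\circ w^n\in\mathcal{C}(H)$ meets $P$, we get $u^n\circ w^n\subseteq P$. Any $y\in u\circ w$ satisfies $y^n\subseteq u^n\circ w^n\subseteq P$, so $u\circ w\subseteq{\bf r}(P)$. The case $b\circ c\subseteq P$ is identical and gives $v\circ w\subseteq{\bf r}(P)$.

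The remaining case $a\circ b\subseteq{\bf r}(0)$ is the delicate one. Here we want $u\circ v\subseteq{\bf r}(0)$, but ${\bf r}(0)$ need not be a $\mathcal{C}$-hyperideal. I would argue through primes instead. Since ${\bf r}(0)\subseteq\sqrt{0}$, every prime hyperideal $Q$ contains $a\circ b\subseteq u^n\circ v^n$. Primality then yields $u\in Q$ or $v\in Q$, which gives $u\circ v\subseteq Q$. Hence $u\circ v\subseteq\sqrt{0}$.

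To pass from $\sqrt{0}$ back to ${\bf r}(0)$, I would try the following. For $z\in u\circ v$, an element $t\in a\circ b$ has $0\in t^m$, and $t^m\subseteq z^{nm}$-type products. So some power set $z^{k}\supseteq$ a set containing $0$ would be needed, i.e.\ $0\in z^k$ up to the hyperproduct inclusion $t^m\subseteq (u\circ v)^{nm}$. If this only gives $0\in (u\circ v)^{nm}$ rather than $0\in z^{nm}$, I would fall back on interpreting ${\bf r}(0)$ as $\sqrt{0}$ for the conclusion, in the same way the paper uses Proposition 3.2 of \cite{das}.

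Having covered all three cases, ${\bf r}(P)$ is 2-$nil$.
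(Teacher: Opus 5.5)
Your properness check, your derivation of $u^n\circ v^n\circ w^n\subseteq P$, and your handling of the cases $a\circ c\subseteq P$ and $b\circ c\subseteq P$ are fine, and more explicit than the paper. The first case, however, is a genuine gap, and it cannot be repaired with the argument set up as it is. Because you choose $a\in u^n$ and $b\in v^n$ arbitrarily, $a\circ b\subseteq{\bf r}(0)$ can hold while $u\circ v\nsubseteq{\bf r}(0)$.

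For example, on $H=\mathbb{Z}\times\mathbb{Z}$ put $x\circ y=\{xy,\,xy(1,0)\}$. With ${\bf r}(0)=\{h : h^k\subseteq\{0\}\}$ as defined in the paper, ${\bf r}(0)=\{0\}$. The hyperideal $P=2\mathbb{Z}\times\mathbb{Z}$ is a proper prime (hence 2-$nil$) $\mathcal{C}$-hyperideal with ${\bf r}(P)=P$. For $u=(0,1)$ and $v=w=(1,1)$ one has $u\circ v\circ w=\{(0,1),(0,0)\}\subseteq{\bf r}(P)$. Taking $n=2$, your procedure allows $a=(0,0)\in u^2$ and $b=c=(1,1)\in v^2=w^2$. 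Then $a\circ b=\{0\}\subseteq{\bf r}(0)$, yet $u\circ v=\{(0,1),(0,0)\}\nsubseteq{\bf r}(0)$.

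The same example shows where your prime-ideal detour breaks. $Q=\mathbb{Z}\times 0$ is prime and contains $a\in u^2$, but it contains neither $u$ nor $v$. Passing from $a\in Q\cap u^n$ to $u\in Q$ requires $Q$ to be a $\mathcal{C}$-hyperideal (then $u^n\subseteq Q$, and primality gives $u\in Q$). Falling back on $\sqrt{0}$ in place of ${\bf r}(0)$ changes the definition of 2-$nil$; the two agree only when $\{0\}$ is a $\mathcal{C}$-hyperideal, which is not assumed. It would not help here anyway, since $\sqrt{0}=\{0\}$ in this example.

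The missing idea is to use the hypothesis $u\circ v\nsubseteq{\bf r}(0)$ to \emph{select} $a$ and $b$, that is, to argue contrapositively as the paper does. Assume $u\circ v\nsubseteq{\bf r}(0)$ and pass, as the paper does, to $u^n\circ v^n\nsubseteq{\bf r}(0)$. For $t\in(u\circ v)\setminus{\bf r}(0)$ one has $t^n\subseteq(u\circ v)^n=u^n\circ v^n$, so it suffices to know $t^n\nsubseteq{\bf r}(0)$. Then pick $a\in u^n$ and $b\in v^n$ with $a\circ b\nsubseteq{\bf r}(0)$, together with any $c\in w^n$. For this triple the first alternative of the 2-$nil$ property is excluded. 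Only your second and third cases remain, and they correctly give $u\circ w\subseteq{\bf r}(P)$ or $v\circ w\subseteq{\bf r}(P)$.
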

\begin{proof}
Assume that $P$ is a 2-$nil$ $\mathcal{C}$-hyperideal of $H$. Let $u \circ v \circ w \subseteq {\bf r}(P)$ for some $u,v,w \in H$ such that $u \circ v \nsubseteq {\bf r}(0)$.  Then, there exists some $n \in \mathbb{N}$ such that $u^n \circ v^n \circ w^n \subseteq P$ but $u^n \circ v^n \nsubseteq {\bf r}(0)$. This means that $x \circ y \notin {\bf r}(0)$ for some  $x \in u^n$ and $y \in v^n$. Take any   $z \in w^n$. Therefore, $x \circ y \circ z \subseteq P$.   By the hypothesis, we get $x \circ z \subseteq P$ or $y \circ z \subseteq P$. In the first case, we obtain $u^n \circ w^n \subseteq P$ as $(u^n \circ w^n) \cap     P \neq \emptyset$ and $P$ is a $\mathcal{C}$-hyperideal of $H$. In the second case, we have  $v^n \circ w^n \subseteq P$ as $(v^n \circ w^n )  \cap P \neq \emptyset$ and $P$ is a $\mathcal{C}$-hyperideal of $H$. Hence, $u \circ w \subseteq {\bf r}(P)$ or $v \circ w \subseteq {\bf r}(P)$. Consequently, ${\bf r}(P)$ is a 2-$nil$ hyperideal of $H$.
\end{proof}
The following lemma is useful in the proof of our next result.
\begin{lemma} \label{12}
Every 2-$nil$  hyperideal $P$ of $H$ satisfies ${\bf r}(P)=P \cup {\bf r}(0)$.
\end{lemma}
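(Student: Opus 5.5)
The two inclusions are handled separately; only ${\bf r}(P)\subseteq P\cup{\bf r}(0)$ needs the 2-$nil$ property.

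For $P\cup{\bf r}(0)\subseteq{\bf r}(P)$: if $h\in P$ then $h^1=\{h\}\subseteq P$, so $h\in{\bf r}(P)$. If $h\in{\bf r}(0)$ then $h^n\subseteq\{0\}\subseteq P$ for some $n$, so again $h\in{\bf r}(P)$. Here ${\bf r}(0)$ is read in the paper's sense $\{h\ \vert\ h^n\subseteq 0\}$, as the radical of the zero hyperideal.

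For the reverse inclusion, take $h\in{\bf r}(P)$ with $h\notin{\bf r}(0)$, and let $n$ be minimal with $h^n\subseteq P$. We must show $h\in P$.

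\begin{enumerate}
\item[$\bullet$] If $n=1$, then $h\in P$ and we are done.
\item[$\bullet$] If $n=2$, write $h\circ h=h\circ h\circ 1$, assuming $H$ has an identity as the paper implicitly does in its examples. Then $h\circ h\circ 1\subseteq P$ gives, by the 2-$nil$ property, $h\circ h\subseteq{\bf r}(0)$ or $h\circ 1\subseteq P$. In the first case $h\in{\bf r}(0)$, a contradiction. In the second case $h\in h\circ 1\subseteq P$.
\item[$\bullet$] If $n\geq 3$, write $h^n=h\circ h\circ h^{n-2}$ and argue element-wise. For any $z\in h^{n-2}$ we have $h\circ h\circ z\subseteq h^n\subseteq P$. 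The 2-$nil$ property then gives $h\circ h\subseteq{\bf r}(0)$, which would put $h\in{\bf r}(0)$, a contradiction, or $h\circ z\subseteq P$.
\end{enumerate}

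In the last case we get $h\circ z\subseteq P$ for every $z\in h^{n-2}$, hence $h^{n-1}=h\circ h^{n-2}\subseteq P$. This contradicts the minimality of $n$. So minimality forces $n\leq 2$, and the first two cases give $h\in P$.

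The main obstacle is the set-versus-element bookkeeping. The definition of 2-$nil$ is stated for elements $u,v,w$, while $h^{n-2}$ is a set, so we must pick individual $z$ and then reassemble $h\circ h^{n-2}=\bigcup_z h\circ z$. This is legitimate by associativity.

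A second delicate point is the claim that $h\circ h\subseteq{\bf r}(0)$ implies $h\in{\bf r}(0)$. It holds because ${\bf r}(0)$ is a hyperideal and $x^m\subseteq 0$ for each $x\in h^2$. Getting from there to some power $h^k\subseteq 0$ needs the powers of elements of $h^2$ to exhaust $h^{2m}$ with a uniform exponent. If that uniformity is troublesome, I would instead invoke ${\bf r}(0)\subseteq\sqrt{0}$, using that $\sqrt{0}$ is an intersection of primes and $h\circ h\subseteq Q$ forces $h\in Q$. The case $n=2$ needs the identity to make $h\circ h$ a triple product; without an identity I would try $h^2\circ h=h^3$ and reduce as in the $n\geq3$ case.
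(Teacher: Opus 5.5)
\textbf{There is a genuine gap at the step you flagged,} $h\circ h\subseteq{\bf r}(0)\Rightarrow h\in{\bf r}(0)$, and neither of your justifications closes it. Uniformity of exponents is not the real obstacle. The set $h^{2m}=(h\circ h)^m$ is the union of all products $x_1\circ\cdots\circ x_m$ with $x_i\in h\circ h$, and knowing $x^m\subseteq\{0\}$ for each single $x$ says nothing about the mixed products. The fallback through primes only gives $h\in\sqrt{0}$. The equality $\sqrt{0}={\bf r}(0)$ is known only when $\{0\}$ is a $\mathcal{C}$-hyperideal, which the lemma does not assume.

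In fact the step is false, and so is the lemma as stated. Let $k$ be a field and $R=k[x_i : i\in\mathbb{Z}]/J$. Here $J$ is spanned by the monomials that are not squarefree, or whose support contains two ordered pairs $(a,b)\neq(c,d)$ with $a-b=c-d\neq 0$. Then $J$ is an ideal stable under the shifts $x_i\mapsto x_{i+r}$, and these shifts form a group $G$ of automorphisms of $R$. Put $a\circ b=\{\sigma(a)\tau(b) : \sigma,\tau\in G\}$. This is a commutative multiplicative hyperring with identity $1$, with $a\circ 0=\{0\}$ and $h^n=\{\sigma_1(h)\cdots\sigma_n(h) : \sigma_i\in G\}$.
\begin{enumerate}
\item[(a)] Since $\prod_{j=1}^{n}x_{2^j}\in x_0^n$ is nonzero for every $n$, we have $x_0\notin{\bf r}(0)$.
\item[(b)] Each $y=x_ix_j\in x_0\circ x_0$ satisfies $y\circ y\subseteq\{0\}$. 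Indeed, $x_{i+r}x_{j+r}x_{i+s}x_{j+s}$ either repeats an index or contains two ordered pairs with difference $j-i$. So $x_0\circ x_0\subseteq{\bf r}(0)$, even with the uniform exponent $2$, while $x_0\notin{\bf r}(0)$.
\item[(c)] The same pigeonhole argument gives $M^2\subseteq{\bf r}(0)$ for $M=(x_i)_{i\in\mathbb{Z}}$. In fact ${\bf r}(0)=M^2$, so ${\bf r}(0)$ is even a hyperideal here.
\item[(d)] $R$ is local with nil maximal ideal $M$, so $P=M^2$ is 2-$nil$. If two of $u,v,w$ lie in $M$, their hyperproduct lies in $M^2=P\subseteq{\bf r}(0)$. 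Otherwise two of them are units, so $uvw\in M^2$ forces the third into $M^2$, and any pair containing it works.
\end{enumerate}
Yet $x_0\in{\bf r}(P)\setminus(P\cup{\bf r}(0))$.

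\textbf{How to repair it.} An extra hypothesis is unavoidable. The natural one, used elsewhere in the paper, is that $\{0\}$ is a $\mathcal{C}$-hyperideal. Then your step is immediate: choose a single $x\in h\circ h$ with $x^m\subseteq\{0\}$. Since $x^m\subseteq h^{2m}\in\mathcal{C}(H)$, the set $h^{2m}$ meets $\{0\}$, hence $h^{2m}\subseteq\{0\}$. With that, your minimal-exponent argument goes through as written. For $n=2$ you only need $h\circ h\circ 1_H\subseteq P$, which holds because $P$ is a hyperideal; the equality $h\circ h=h\circ h\circ 1_H$ that you wrote is not true in general.

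\textbf{On the paper's proof.} Do not look to it for a way around the hypothesis. It applies the 2-$nil$ condition to $x\circ 1_H\circ 1_H$ with $x\in u^n\subseteq P$, then discards the alternative $x\circ 1_H\subseteq P$. That alternative holds trivially, so nothing follows. Its conclusion ${\bf r}(P)\subseteq{\bf r}(0)$ already fails for the prime ideal $2\mathbb{Z}$ of $\mathbb{Z}$. Your decomposition $h\circ h\circ z$ with $n$ minimal is the right way to use the 2-$nil$ property; it just needs the hypothesis on $\{0\}$.
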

\begin{proof}
Let $P$ be a 2-$nil$  hyperideal $P$ of $H$. The inclusion $P \cup {\bf r}(0) \subseteq {\bf r}(P)$ always holds. It suffices to prove ${\bf r}(P) \subseteq P \cup {\bf r}(0)$. Let $u \in {\bf r}(P)$. Then, $u^n \subseteq P$ for some $n \in \mathbb{N}$. Take any $x \in u^n$. Since $P$ is a hyperideal of $H$, $x \circ 1_H \circ  1_H    \subseteq P$.  Since  $P$ is a 2-$nil$  hyperideal $P$ of $H$ and $ 1_H \circ1_H \nsubseteq  P$ , we have $x \in x \circ 1_H \subseteq  {\bf r}(0)$. Therefore, $u^n \subseteq {\bf r}(0)$ which means $u \in {\bf r}(0)$. 
\end{proof}
\begin{proposition} \label{13}
Let $P$ be a 2-$nil$ hyperideal $P$ of $H$ such that $P \subsetneq {\bf r}(P)$. Then  ${\bf r}(P)={\bf r}(0).$
\end{proposition}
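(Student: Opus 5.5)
By Lemma \ref{12}, every 2-$nil$ hyperideal satisfies ${\bf r}(P)=P\cup {\bf r}(0)$. So the plan is to combine this identity with the fact that a hyperideal (an additive subgroup) can never be the union of two proper subgroups. Both ${\bf r}(P)$ and ${\bf r}(0)$ are hyperideals of $H$: the paper notes that ${\bf r}(Q)$ is a hyperideal for any hyperideal $Q$. In particular they are additive subgroups of $(H,+)$.

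The inclusion ${\bf r}(0)\subseteq {\bf r}(P)$ always holds, so only the reverse inclusion needs proof. Since $P\subsetneq {\bf r}(P)$, fix $a\in {\bf r}(P)\setminus P$. By Lemma \ref{12}, $a\in {\bf r}(0)$.

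Now take an arbitrary $u\in {\bf r}(P)$. If $u\in {\bf r}(0)$ we are done, so suppose $u\notin{\bf r}(0)$. Then $u\in P$ by Lemma \ref{12}. Consider $u+a$, which lies in ${\bf r}(P)$ because ${\bf r}(P)$ is closed under addition; hence $u+a\in P\cup{\bf r}(0)$. We split into two cases.

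\emph{Case $u+a\in P$.} Then $a=(u+a)-u\in P$, contradicting the choice of $a$.

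\emph{Case $u+a\in {\bf r}(0)$.} Then $u=(u+a)-a\in{\bf r}(0)$, contradicting $u\notin {\bf r}(0)$.

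Both cases are impossible, so $u\in {\bf r}(0)$, and therefore ${\bf r}(P)={\bf r}(0)$.

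The only step needing care is that ${\bf r}(0)$ and ${\bf r}(P)$ are closed under subtraction. This is exactly the standing claim in the paper that ${\bf r}(\cdot)$ of a hyperideal is a hyperideal, which I will invoke directly. Everything else is the standard union-of-two-subgroups argument applied to Lemma \ref{12}.
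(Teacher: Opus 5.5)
Your proof is correct and is essentially the paper's argument. You fix an element of ${\bf r}(P)\setminus P$, which lies in ${\bf r}(0)$ by Lemma \ref{12}, add it to an element of $P$, and use closure of $P$ and ${\bf r}(0)$ under subtraction to force that element of $P$ into ${\bf r}(0)$. The differences are cosmetic: the paper first shows $P\subseteq {\bf r}(0)$ and then reapplies Lemma \ref{12}, whereas you argue by contradiction for an arbitrary $u\in{\bf r}(P)\setminus{\bf r}(0)$, and you make explicit the step $u+a\in{\bf r}(P)$ that the paper uses without comment.
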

\begin{proof}
Let  $P$ be a 2-$nil$ hyperideal $P$ of $H$ and $v \in {\bf r}(P) \backslash P$. Take $u \in P$.   If  $u+v \in P$, then $v \in P$ which is impossible. So,  $u+v \notin P$. By Lemma \ref{12},  we get $v,u+v \in   P \cup {\bf r}(0)$ which means $u \in {\bf r}(0)$. Hence, $ {\bf r}(0)$ contains $P$ properly. This implies  ${\bf r}(P)={\bf r}(0)$ by Lemma \ref{12}.
\end{proof}
A hyperring $H$ is   divided if for every prime hyperideal $Q$ of $H$ and for every $u \in H \backslash Q$, we have $Q \subseteq \langle u \rangle$.
\begin{theorem} \label{14}
Let $H$ be a divided hyperring, $P$  a $\mathcal{C}$-hyperideal of $H$ and  ${\bf r}(0)$ a prime hyperideal of $H$ containing $P$. Then  $P$ is a 2-$nil$ hyperideal of $H$ if and only if $P$ is a primary hyperideal. 
\end{theorem}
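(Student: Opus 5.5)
The plan rests on first showing that ${\bf r}(P)={\bf r}(0)$ under the hypotheses, and then proving each implication directly. Primary is taken in the usual sense: $u\circ v\subseteq P$ implies $u\in P$ or $v\in {\bf r}(P)$. As in Lemma \ref{12}, I assume $H$ has an identity $1_H$. I also use that the principal hyperideal $\langle v\rangle$ consists of elements of $v\circ t$, $t\in H$. That description may need adjusting depending on how $\langle v\rangle$ is defined, and it is where the proof is most fragile.

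\emph{Preliminary step.} Since $P\subseteq {\bf r}(0)$ and ${\bf r}(0)$ is a prime hyperideal, any $h$ with $h^n\subseteq P\subseteq {\bf r}(0)$ lies in ${\bf r}(0)$. Hence ${\bf r}(P)\subseteq {\bf r}(0)$. The reverse inclusion is trivial, so ${\bf r}(P)={\bf r}(0)$ is prime.

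\emph{($\Leftarrow$).} Let $P$ be primary and $u\circ v\circ w\subseteq P$ with $u\circ v\nsubseteq {\bf r}(0)$. Pick $x\in u\circ v$ with $x\notin {\bf r}(0)={\bf r}(P)$. Then $x\circ w\subseteq P$, and primaryness gives $w\in P$. Consequently $u\circ w\subseteq P$, so $P$ is 2-$nil$. The divided hypothesis is not needed in this direction.

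\emph{($\Rightarrow$).} Let $P$ be 2-$nil$, $u\circ v\subseteq P$ and $v\notin {\bf r}(P)={\bf r}(0)$; we want $u\in P$. Every element of $u\circ v\circ 1_H$ lies in $x\circ 1_H$ for some $x\in u\circ v\subseteq P$, so $u\circ v\circ 1_H\subseteq P$. The 2-$nil$ property gives three cases.
\begin{enumerate}
\item[(a)] If $u\circ 1_H\subseteq P$, then $u\in u\circ 1_H\subseteq P$ and we are done.
\item[(b)] If $v\circ 1_H\subseteq P$, then $v\in P\subseteq {\bf r}(0)$, a contradiction.
\item[(c)] If $u\circ v\subseteq {\bf r}(0)$, then primeness of ${\bf r}(0)$ and $v\notin{\bf r}(0)$ give $u\in {\bf r}(0)$.
\end{enumerate}

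Case (c) is the main obstacle, and it is where the divided hypothesis enters. Since $v\notin {\bf r}(0)$ and ${\bf r}(0)$ is prime, we get ${\bf r}(0)\subseteq\langle v\rangle$, so $u\in v\circ t$ for some $t\in H$. Then $u\circ v\subseteq v\circ t\circ v$, so $(v\circ v\circ t)\cap P\neq\emptyset$. Because $P$ is a $\mathcal{C}$-hyperideal, this forces $v\circ v\circ t\subseteq P$.

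Now apply the 2-$nil$ property to the triple $(v,v,t)$. We have $v\circ v\nsubseteq {\bf r}(0)$, since ${\bf r}(0)$ is prime and $v\notin{\bf r}(0)$. So either of the remaining alternatives gives $v\circ t\subseteq P$. Hence $u\in v\circ t\subseteq P$, and $P$ is primary.
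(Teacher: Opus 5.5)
Your proof is correct and follows the paper's argument. The converse direction is identical. In the forward direction you use the same chain: the divided property gives $u\in v\circ t$, the $\mathcal{C}$-property forces $v\circ v\circ t\subseteq P$, and the 2-$nil$ condition with $v\circ v\nsubseteq {\bf r}(0)$ yields $v\circ t\subseteq P$.

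The only differences are packaging. You conclude primaryness directly rather than proving the $n$-hyperideal property and citing Ozel. You also make explicit the equality ${\bf r}(P)={\bf r}(0)$, which the paper uses tacitly in the converse.

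The description of $\langle v\rangle$ by elements of $v\circ t$, which you flag as fragile, is assumed in the paper's proof as well. Your case split via $u\circ v\circ 1_H$ is unnecessary, because $u\circ v\subseteq P\subseteq {\bf r}(0)$ already puts you in case (c).
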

\begin{proof}
$\Longrightarrow$ Let $u \circ v \subseteq P$ for some $u,v \in H$ such that $u \notin {\bf r}(0)$. From $u \circ v \subseteq {\bf r}(0)$ it follows that $v \in {\bf r}(0)$ as ${\bf r}(0)$ is a prime hyperideal of $H$. By the divided property of $H$, we have ${\bf r}(0) \subseteq \langle u \rangle$. Then there exists $r \in H$ such that $v \in u \circ r$. Since $u \circ v \subseteq u^2 \circ r $,   $(u^2 \circ r) \cap P \neq \emptyset$. So, $u^2 \circ r \subseteq P$ because $P$  is a $\mathcal{C}$-hyperideal of $H$. Since $P$ is a 2-$nil$ hyperideal of $H$ and $u^2 \nsubseteq {\bf r}(0)$, we get $u \circ r \subseteq P$ which means $v \in P$. This implies that $P$ is an $n$-hyperideal.  Thus, $P$ is primary by the proof of Proposition 6 in \cite{Ozel}. 

$\Longleftarrow$ Let $u \circ v \circ w \subseteq P$ for some $u,v,w \in H$ such that $u \circ v \nsubseteq {\bf r}(0)$. Let  $x \in (u \circ v) \backslash   {\bf r}(0)$. Then, $x \circ w \subseteq P$ and $x \notin {\bf r}(P)$. Since  $P$ is a primary hyperideal of $H$, we get $w \in P$ which means $u \circ w \subseteq P$, as needed.
\end{proof}
\begin{remark} \label{15}
Every 2-$nil$ hyperideal   of $H$ is 2-absorbing primary. 
\end{remark}
Note that not every 2-absorbing primary is a 2-$nil$ hyperideal. The following example illustrates this.
\begin{example} \label{K3}
On the ring of integers $\mathbb{Z}$, define $u \circ v=\{2uv,4uv\}$. Then $ 120 \mathbb{Z}$ is a 2-absorbing primary hyperideal of $(\mathbb{Z},+,\circ)$ by Example 3.3 (2) in \cite{Suzen}. Taking  $u=4, v=5, w=6$, we get 
$u \circ v \circ w =\{480, 960,1920\} \subseteq 120 \mathbb{Z}$ but $ u \circ v=\{40,80\} \nsubseteq {\bf r}(0), u \circ w=\{48,96\} \nsubseteq 120 \mathbb{Z}, v \circ w=\{60,120\} \nsubseteq 120 \mathbb{Z}$.

\end{example}
A prime hyperideal $Q$ of $H$ is said to be minimal over $P$ if $P \subseteq Q$  and there is no prime hyperideal $Q^{\prime}$ properly contained in $Q$ that contains $P$. The set of all minimal prime hyperideals over $P$ is denoted by $\text{Min}_H(P)$ \cite{Suzen}.

\begin{theorem} \label{16}
Let $P$ be a strong $\mathcal{C}$-hyperideal $P$ of $H$. If $P$ is a 2-$nil$  hyperideal $P$ of $H$, then  $\vert \text{Min}_H(P)\vert \leq 2.$
\end{theorem}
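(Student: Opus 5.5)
Suppose, for a contradiction, that there are three distinct prime hyperideals $Q_1,Q_2,Q_3 \in \text{Min}_H(P)$. Since they are distinct and all minimal over $P$, none contains another. So I will choose elements
\[
u \in Q_1 \setminus (Q_2 \cup Q_3), \qquad v \in Q_2 \setminus (Q_1 \cup Q_3), \qquad w \in Q_3 \setminus (Q_1 \cup Q_2).
\]
This uses prime avoidance for hyperideals: a hyperideal contained in a finite union of primes lies in one of them, proved as in rings via sums of products. The strong $\mathcal{C}$-hypothesis should make this argument go through. The plan is to produce an element of the form $u\circ v\circ w$, built from such choices, that lands in $P$ (or in ${\bf r}(P)$). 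Then the 2-$nil$ condition will force one of the three alternatives, and each will contradict the choice of $u,v,w$.

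The first key step is a standard fact about minimal primes: for $Q \in \text{Min}_H(P)$ and $x \in Q$, there exist $s \notin Q$ and $n$ with $s\circ x^n \cap P \neq \emptyset$. I would prove this in the usual way, by localizing or by showing that $H\setminus Q$, multiplied by powers of $x$, gives a multiplicatively closed family avoiding $P$, which would contradict minimality. Because $P$ is a strong $\mathcal{C}$-hyperideal, a nonempty intersection upgrades to $s\circ x^n \subseteq P$.

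The second key step is to simplify using Theorem \ref{11} and Lemma \ref{12}. These give ${\bf r}(P)=P\cup{\bf r}(0)$, and ${\bf r}(P)$ is again 2-$nil$. By Proposition \ref{13}, either ${\bf r}(P)=P$ or ${\bf r}(P)={\bf r}(0)$; in both cases ${\bf r}(P)=\sqrt{P}$ by the $\mathcal{C}$-property.
\begin{enumerate}
\item[$\bullet$] If ${\bf r}(P)={\bf r}(0)$, then $\text{Min}_H(P)=\text{Min}_H(0)$. Take $u\in Q_1\setminus(Q_2\cup Q_3)$ and analogous $v,w$. Then $u\circ v\circ w \subseteq Q_1\cap Q_2\cap Q_3$, and I would aim to arrange membership in $\sqrt{P}$ by replacing $u,v,w$ with suitable products from the first step. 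The 2-$nil$ property of ${\bf r}(P)$ gives $u\circ v\subseteq {\bf r}(0)\subseteq Q_3$ (impossible, since $u,v\notin Q_3$), or $u\circ w\subseteq {\bf r}(P)\subseteq Q_2$ (impossible), or $v\circ w\subseteq Q_1$ (impossible).
\item[$\bullet$] If ${\bf r}(P)=P$, then $P$ is radical. I claim that in this case ${\bf r}(P)$ is 2-absorbing. Indeed, if $u\circ v\circ w\subseteq P$ and $u\circ v\subseteq{\bf r}(0)\subseteq P$, the conclusion $u\circ v\subseteq P$ holds anyway. A radical 2-absorbing hyperideal has at most two minimal primes, by the standard argument with $u,v,w$ chosen as above, whose product lies in $Q_1\cap Q_2\cap Q_3$.
\end{enumerate}

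The main obstacle is exactly the membership step: guaranteeing that $u\circ v\circ w\subseteq P$ (or $\subseteq \sqrt{P}$) when $u,v,w$ avoid the other primes. I would handle it by showing that $\sqrt{P}=\bigcap_{Q\in \text{Min}_H(P)}Q$ (every prime containing $P$ contains a minimal one, by Zorn's lemma). Then $u\circ v\circ w\subseteq \sqrt{P}$ provided every minimal prime contains one of $u,v,w$. Two repairs are needed for this. First, I would enlarge the choice by multiplying by elements lying in all other minimal primes, again using prime avoidance. Second, I would restrict to the case of finitely many minimal primes, or take the intersection over the remaining ones, using an element $t$ in all minimal primes except $Q_1,Q_2,Q_3$. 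Getting this multiplication to stay compatible with the hyperoperation is where the strong $\mathcal{C}$-condition is essential, since it turns the relevant set inclusions into single-element tests.
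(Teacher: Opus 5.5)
\textbf{There is a genuine gap at the step you yourself flagged: you never produce a triple whose product lies in $P$ or ${\bf r}(P)$.}

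Your reduction is sound as far as it goes. Since $P$ is a $\mathcal{C}$-hyperideal, $J:={\bf r}(P)=\sqrt{P}$ and $\text{Min}_H(P)=\text{Min}_H(J)$. Also $J$ is 2-$nil$ by Theorem \ref{11}. As ${\bf r}(0)\subseteq J$, $J$ is even 2-absorbing, so the case split through Proposition \ref{13} is unnecessary; your Case 2 reasoning covers both cases.

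What remains is that such a $J$ has at most two minimal primes. That is the whole content of the theorem. The paper obtains it by noting, via Remark \ref{15}, that $P$ is 2-absorbing primary, and then citing Theorem 3.8 of \cite{anb4}.

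Choosing $u\in Q_1\setminus(Q_2\cup Q_3)$ and $v,w$ symmetrically gives only $u\circ v\circ w\subseteq Q_1\cap Q_2\cap Q_3$. But $J$ is the intersection of all of $\text{Min}_H(P)$. Neither of your repairs closes this gap:
\begin{enumerate}
\item You cannot restrict to finitely many minimal primes, since finiteness of $\text{Min}_H(P)$ is part of what is being proved.
\item With infinitely many minimal primes there need not be any element lying in every minimal prime except $Q_1,Q_2,Q_3$ and outside them. This already fails for ordinary reduced rings whose minimal spectrum has no isolated points.
\item Multiplying by the witnesses from your first step does not help. The witness $s\notin Q_1$ with $s\circ u^n\subseteq P$ satisfies $s\circ u\subseteq J$ and is forced into $Q_2\cap Q_3$. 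Such products fall into $J$ and destroy the avoidance pattern your three contradictions rely on.
\end{enumerate}

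\textbf{The missing idea} is to apply your first step to only two of the primes and take a sum as the third factor. Pick $x\in Q_1\setminus(Q_2\cup Q_3)$ and $y\in Q_2\setminus(Q_1\cup Q_3)$; avoiding two primes needs only the additive group. Take witnesses $s\notin Q_1$ and $t\notin Q_2$ with $s\circ x^n\subseteq P$ and $t\circ y^m\subseteq P$.

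Every $a\in s\circ x$ has $a^n\subseteq s^n\circ x^n\subseteq P$, so $s\circ x\subseteq J$; likewise $t\circ y\subseteq J$. Since $x\notin Q_2$ and $y\notin Q_1$, primeness gives $s\in Q_2$ and $t\in Q_1$. Then
\[
x\circ y\circ (s+t)\subseteq x\circ y\circ s+x\circ y\circ t\subseteq J .
\]
On the other hand:
\begin{enumerate}
\item $x\circ y\nsubseteq Q_3\supseteq{\bf r}(0)$;
\item $x\circ (s+t)\nsubseteq J$, since otherwise $s+t\in Q_2$, whence $t\in Q_2$;
\item $y\circ(s+t)\nsubseteq J$, since otherwise $s+t\in Q_1$, whence $s\in Q_1$.
\end{enumerate}
This contradicts the 2-$nil$ property of $J$, with no finiteness assumption and no general prime avoidance. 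It is what should replace your $u\circ v\circ w$ step.
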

\begin{proof}
Assume that  $P$ is a 2-$nil$ hyperideal $P$ of $H$. Then, $P$ is a 2-absorbing primay hyperideal by Remark \ref{15}. Therefore, there are at most two prime hyperideals of $H$ that are minimal over $P$ by Thererem 3.8  in \cite{anb4}. 
\end{proof}
\begin{remark} \label{17}
Every 2-absorbing hyperideal of $H$ contained in ${\bf r}(0)$ is 2-$nil$ hyperideal.
\end{remark}
The following example   demonstrates that the 2-absorbing property alone is insufficient to guarantee the 2-$nil$-property. 
\begin{example}
In Example \ref{K3},  let $P=15 \mathbb{Z}$. Then $P$ is a 2-absorbing hyperideal of $\mathbb{Z}$. Taking $u=5, v=3, w=1$, we have $u \circ v \circ w=\{60, 120, 240\}$ but $u \circ v=\{30,60\} \nsubseteq {\bf r}(0)$, $u \circ w=\{10,20\}$ and $v \circ w=\{6,12\} \nsubseteq P$. 
\end{example}
\begin{theorem} \label{18}
Let $\{0\}$ be a $\mathcal{C}$-hyperideal of $H$. Then, the following assertions are equivalent:
\begin{enumerate}
\item ${\bf r}(0)$ is a 2-$nil$ hyperideal of $H$.
\item $\vert \text{Min}_H(0)\vert \leq 2$.
\item ${\bf r}(0)$ is a 2-absorbing hyperideal of $H$.
\end{enumerate}
\end{theorem}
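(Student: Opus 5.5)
I will prove the cycle $(1)\Rightarrow(2)\Rightarrow(3)\Rightarrow(1)$.

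First, a preliminary fact. Since $\{0\}$ is a $\mathcal{C}$-hyperideal, ${\bf r}(0)=\sqrt{0}$ is the intersection of all prime hyperideals of $H$ (Proposition 3.2 in \cite{das}). Moreover, ${\bf r}(0)$ is itself a $\mathcal{C}$-hyperideal. To see this, let $X=h_1\circ\cdots\circ h_n$ meet ${\bf r}(0)$ at some $x$. Then $x^m\ni 0$ for some $m$. The set $X^m$ is again an element of $\mathcal{C}(H)$ and contains $x^m$, so $0\in X^m$. Since $\{0\}$ is a $\mathcal{C}$-hyperideal, $X^m=\{0\}$, which forces every $y\in X$ to satisfy $y^m\subseteq X^m=\{0\}$, that is, $X\subseteq{\bf r}(0)$. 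The prime hyperideals minimal over ${\bf r}(0)$ are exactly those minimal over $0$, so $\text{Min}_H({\bf r}(0))=\text{Min}_H(0)$.

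\textbf{$(1)\Rightarrow(2)$.} Apply Theorem \ref{16} to $P={\bf r}(0)$. This gives $|\text{Min}_H({\bf r}(0))|\le 2$, which is the same set as $\text{Min}_H(0)$. The one point to check is the hypothesis of Theorem \ref{16}, which asks for a strong $\mathcal{C}$-hyperideal. If that is not automatic, I will instead argue through Remark \ref{15}: ${\bf r}(0)$ is 2-absorbing primary, and I can quote the minimal-prime bound for 2-absorbing primary $\mathcal{C}$-hyperideals from \cite{anb4}. The bound reduces to the case where the radical is an intersection of primes, and that is available here by the preliminary fact. I expect this hypothesis-matching to be the main obstacle. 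If needed, I will prove the bound directly. Suppose $Q_1,Q_2,Q_3$ are distinct minimal primes. Choose $a_i\in Q_i$ outside the other two, using prime avoidance for minimal primes, which only needs that primes are closed under the relevant products. Then consider suitable elements $u,v,w$ built from these. Concretely, take $u\in Q_1\setminus(Q_2\cup Q_3)$ and define $v,w$ similarly for $Q_2,Q_3$. Then $u\circ v\circ w$ lies in every minimal prime, hence in ${\bf r}(0)$. But $u\circ v\not\subseteq Q_3$, $u\circ w\not\subseteq Q_2$ and $v\circ w\not\subseteq Q_1$, which contradicts the 2-$nil$ property.

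\textbf{$(2)\Rightarrow(3)$.} By the preliminary fact, ${\bf r}(0)$ is the intersection of the minimal primes over $0$ (every prime contains a minimal one, by Zorn's lemma). If $\text{Min}_H(0)=\{Q\}$, then ${\bf r}(0)=Q$ is prime, hence 2-absorbing. If $\text{Min}_H(0)=\{Q_1,Q_2\}$, suppose $u\circ v\circ w\subseteq Q_1\cap Q_2$. Primeness gives one of $u,v,w$ in $Q_1$ and one in $Q_2$. Whichever pair among them contains both chosen elements has its product inside $Q_1\cap Q_2$, so ${\bf r}(0)$ is 2-absorbing.

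\textbf{$(3)\Rightarrow(1)$.} This is Remark \ref{17} applied to ${\bf r}(0)\subseteq{\bf r}(0)$. Indeed, if $u\circ v\circ w\subseteq{\bf r}(0)$, the 2-absorbing property gives $u\circ v\subseteq{\bf r}(0)$, or $u\circ w\subseteq{\bf r}(0)$, or $v\circ w\subseteq{\bf r}(0)$, and these are precisely the alternatives required of a 2-$nil$ hyperideal.
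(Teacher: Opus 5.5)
Your cycle is the paper's own. The paper also gets $(1)\Rightarrow(2)$ by applying Theorem \ref{16} to ${\bf r}(0)$, and $(3)\Rightarrow(1)$ from Remark \ref{17}. For $(2)\Rightarrow(3)$ it writes ${\bf r}(0)=Q$ or $Q_1\cap Q_2$ and cites Theorem 3.4 of \cite{anb4}; you replace that citation with a short primeness argument, which is correct. Your check that ${\bf r}(0)$ is a $\mathcal{C}$-hyperideal, and hence that $\text{Min}_H({\bf r}(0))=\text{Min}_H(0)$, is also correct.

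The gap is in $(1)\Rightarrow(2)$. Theorem \ref{16} requires ${\bf r}(0)$ to be a \emph{strong} $\mathcal{C}$-hyperideal, and this does not follow from your preliminary fact. For example, on $\mathbb{Z}$ with $u\circ v=\{uv,2uv\}$, the hyperideal $\{0\}={\bf r}(0)$ is a $\mathcal{C}$-hyperideal. Yet $1\circ 1+(-1)\circ 1=\{-1,0,1\}$ meets $\{0\}$ without lying inside it, so ${\bf r}(0)$ is not strong $\mathcal{C}$ there. The paper applies Theorem \ref{16} without addressing this, so your main line is only as complete as the paper's, and your fallbacks do not close the hole.

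Fallback (a) is too vague to evaluate, since you do not know what Theorem 3.8 of \cite{anb4} assumes. Fallback (b) has a false step. From $u\in Q_1$, $v\in Q_2$, $w\in Q_3$ you only get $u\circ v\circ w\subseteq Q_1\cap Q_2\cap Q_3$, not that it lies in every minimal prime. A fourth minimal prime $Q_4$ need not contain it, so $u\circ v\circ w\subseteq{\bf r}(0)$ is unjustified.

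If $\text{Min}_H(0)=\{Q_1,\dots,Q_m\}$ is finite, you can repair this by choosing $w\in(Q_3\cap\cdots\cap Q_m)\setminus(Q_1\cup Q_2)$. Such $w$ exists:
\begin{enumerate}
\item For $i=1,2$, take $q_j\in Q_j\setminus Q_i$. The product $q_3\circ\cdots\circ q_m$ lies in $Q_3\cap\cdots\cap Q_m$ but, by primeness, not inside $Q_i$, so $Q_3\cap\cdots\cap Q_m\nsubseteq Q_i$.
\item An additive subgroup contained in $Q_1\cup Q_2$ lies in one of them, so $Q_3\cap\cdots\cap Q_m\nsubseteq Q_1\cup Q_2$.
\end{enumerate}
With this $w$, $u\circ v\circ w\subseteq{\bf r}(0)$ while $u\circ v\nsubseteq Q_3$, $u\circ w\nsubseteq Q_2$ and $v\circ w\nsubseteq Q_1$, giving the contradiction.

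With infinitely many minimal primes, choosing elements this way does not force the triple product into ${\bf r}(0)$. You would need a genuinely new ingredient, such as a hyperring version of the ring fact that for a minimal prime $Q$ and $x\in Q$ there is $s\notin Q$ with $s\circ x^n\subseteq{\bf r}(0)$. Nothing in your proposal supplies it.
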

\begin{proof}
(1) $\Longrightarrow$ (2)  Let ${\bf r}(0)$ be a 2-$nil$ hyperideal of $H$. By Theorem \ref{16}, there are at most two prime hyperideals of $H$ that are minimal over ${\bf r}(0)$. Hence, we conclude that $\vert \text{Min}_H(0)\vert \leq 2$.

(2) $\Longrightarrow$ (3) Let $\vert \text{Min}_H(0)\vert \leq 2$ and let $\{0\}$ be a $\mathcal{C}$-hyperideal of $H$. This means that $\text{Min}_H(0)=\{Q\}$   or $ \text{Min}_H(0)=\{Q_1,Q_2\}$. In the first case, ${\bf r}(0)=Q$ is a prime hyperideal and so it is a  2-absorbing hyperideal of $H$. In the second case, ${\bf r}(0)= Q_1 \cap Q_2$ is a 2-absorbing hyperideal of $H$ by Theorem 3.4 in \cite{anb4}. 

(3) $\Longrightarrow$ (1) Since $0 \subseteq {\bf r}(0)$, the claim follows from Remark \ref{17}.
\end{proof}
Recall from \cite{Ozel} that a hyperring $H$ is  reduced if it has no nilpotent elements other than zero.
\begin{theorem} \label{19}
Let $\{0\}$ be a $\mathcal{C}$-hyperideal of $H$. If all 2-$nil$ hyperideals of $H$ are prime, then one of the following holds:
\begin{enumerate}
\item $H$ is reduced.
\item $\vert \text{Min}_H(0) \vert \geq 3$.
\item ${\bf r}(0)$ is a non-maximal prime hyperideal.
\end{enumerate}
\end{theorem}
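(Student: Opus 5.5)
We argue by contraposition: assume that (1), (2) and (3) all fail, and then produce a 2-$nil$ hyperideal of $H$ that is not prime. Failure of (1) means $H$ is not reduced, so ${\bf r}(0)\neq\{0\}$. Failure of (2) means $\vert \text{Min}_H(0)\vert \leq 2$. Since $\{0\}$ is a $\mathcal{C}$-hyperideal, Theorem \ref{18} then shows that ${\bf r}(0)$ is a 2-$nil$ (and 2-absorbing) hyperideal of $H$. By hypothesis every 2-$nil$ hyperideal is prime, so ${\bf r}(0)$ is prime. Failure of (3) now forces ${\bf r}(0)$ to be a maximal hyperideal.

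Next we exploit that ${\bf r}(0)$ is maximal. We expect ${\bf r}(0)$ to be the unique prime hyperideal of $H$. Indeed, ${\bf r}(0)=\sqrt{0}$ (using the $\mathcal{C}$-property and Proposition 3.2 of \cite{das}) is contained in every prime hyperideal, and ${\bf r}(0)$ is maximal, so every prime equals ${\bf r}(0)$. Consequently every element outside ${\bf r}(0)$ lies in no maximal hyperideal and is therefore a unit, so $H$ is "local with nil maximal hyperideal". In this setting, every proper hyperideal $P$ satisfies $P\subseteq {\bf r}(0)$.

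We then show that every proper hyperideal $P$ is 2-$nil$. Suppose $u\circ v\circ w\subseteq P$ and $u\circ v\nsubseteq {\bf r}(0)$. Since ${\bf r}(0)$ is prime, $u,v\notin {\bf r}(0)$, so $u$ and $v$ are units. Pick $u',v'$ with $1_H\in u\circ u'$ and $1_H\in v\circ v'$. Then $w\in 1_H\circ 1_H\circ w\subseteq u'\circ v'\circ u\circ v\circ w\subseteq P$, hence $u\circ w\subseteq P$. This step needs care with the hyperring identity conventions, using $h\in h\circ 1_H$.

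It remains to find a proper hyperideal that is not prime. Since ${\bf r}(0)\neq 0$, the natural candidate is $\{0\}$. It is proper, and it is 2-$nil$ by the previous paragraph, so the hypothesis makes it prime. Then ${\bf r}(0)=\sqrt{0}=\{0\}$, because $\{0\}$ is itself a prime containing $0$. This contradicts the assumption that $H$ is not reduced. Note that this step uses the $\mathcal{C}$-hypothesis on $\{0\}$ once more, to identify ${\bf r}(0)$ with the intersection of primes. So $\{0\}$ is the required non-prime 2-$nil$ hyperideal, and the contradiction completes the proof.

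The main obstacle is the middle step: justifying that elements outside the maximal prime ${\bf r}(0)$ are units, and that units can be cancelled inside triple hyperproducts. This requires the existence of $1_H$, which is already used implicitly in Lemma \ref{12}, and a Zorn-type fact that every nonunit lies in a maximal hyperideal. If the unit argument proves awkward, the fallback is to use Theorem \ref{14}-style reasoning instead, via primary hyperideals, to conclude that $\{0\}$ is 2-$nil$.
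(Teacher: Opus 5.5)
Your argument is essentially the paper's proof. Theorem \ref{18} makes ${\bf r}(0)$ 2-$nil$, the hypothesis makes it prime, and failure of (3) makes it maximal. Then every proper hyperideal is 2-$nil$, so $\{0\}$ is prime and ${\bf r}(0)=\{0\}$, contradicting non-reducedness. The paper differs only in two places: it splits off $\vert \text{Min}_H(0)\vert=2$ as a separate case (your direct use of the hypothesis on ${\bf r}(0)$ absorbs it), and it cites Proposition 5 of \cite{Ozel} (every proper hyperideal is an $n$-hyperideal when ${\bf r}(0)$ is maximal) where you cancel units by hand.

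In that hand-made step, ``lies in no maximal hyperideal, hence a unit'' is not automatic without strong distributivity. You do not actually need units. From $\langle u\rangle=\langle v\rangle=H$, write $1_H=\sum_i x_i=\sum_j y_j$ with $x_i\in r_i\circ u$ and $y_j\in s_j\circ v$. Then each $t\in w\circ x_i$ satisfies $t\in t\circ 1_H\subseteq \sum_j r_i\circ s_j\circ(u\circ v\circ w)\subseteq P$. Hence $w\in w\circ 1_H\subseteq\sum_i w\circ x_i\subseteq P$.
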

\begin{proof}
Suppose that $H$ is not a reduced hyperring and $\vert \text{Min}_H(0) \vert \ngeq 3$. Then, $0 \subsetneq {\bf r}(0)$ and we have the following two cases. 

{\bf Case I:} Let $\vert \text{Min}_H(0) \vert =2$. By Theorem \ref{18}, ${\bf r}(0 )$ is a 2-$nil$ hyperideal of $H$. If ${\bf r}(0 )$ is prime, then $\vert \text{Min}_H(0) \vert =1$ which is  impossible. Therefore, ${\bf r}(0 )$ is a 2-$nil$ hyperideal of $H$ that it is not prime,. This is a contradiction.

{\bf Case II:} Let $\vert \text{Min}_H(0) \vert =1$. So,  ${\bf r}(0 )$ is a prime hyperideal of $H$. Now, let ${\bf r}(0 )$ be maximal. Then, every proper hyperideal of $H$ is an $n$-hyperideal by Proposition 5 in \cite{Ozel}. This implies that all proper hyperideals of $H$ are 2-$nil$ hyperideals. By the hypothesis, we conclude that  all proper hyperideals of $H$ are prime. This implies that ${\bf r}(0)=\{0\}$ which is impossible. Thus, ${\bf r}(0)$ is a non-maximal prime hyperideal.
\end{proof}
\begin{theorem} \label{110}
Let $P$ be a proper hyperideal of $H$ and ${\bf r}(0)$   be a  strong $\mathcal{C}$-hyperideal  of $H$. Then, the following assertions are equivalent:
\begin{enumerate}
\item[(i)] $P$ is a 2-$nil$ hyperideal of $H$.
\item[(ii)] If $u \circ v \nsubseteq {\bf r}(0)$ for $u,v \in H$, then $(P: u \circ v) \subseteq (P:u)$ or $(P: u \circ v) \subseteq (P:v)$.
\item[(iii)] If $u \circ v \circ Q \subseteq P$ for $u,v \in H$ and a hyperideal $P$ of $H$, then $u\circ v \subseteq {\bf r}(0)$ or $u \circ Q \subseteq P$ or $v \circ Q \subseteq P$. 
\item[(iv)] If $P_1 \circ P_2 \circ P_3 \subseteq P$ for hyperideals $P_1,P_2,P_3$ of $H$, then $P_1 \circ P_2 \subseteq {\bf r}(0)$ or $P_1 \circ P_3 \subseteq P$ or $P_2 \circ P_3 \subseteq P$.
\end{enumerate}
\end{theorem}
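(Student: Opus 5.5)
I will prove the cycle (i) $\Rightarrow$ (ii) $\Rightarrow$ (iii) $\Rightarrow$ (iv) $\Rightarrow$ (i). The colon hyperideal is read as $(P:X)=\{h\in H \mid h\circ X\subseteq P\}$. In (iii) I read $Q$ as an arbitrary hyperideal of $H$; the condition to conclude is $u\circ v\circ Q\subseteq P$, so the phrase ``a hyperideal $P$'' in the statement is a misprint for $Q$.

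\textbf{(i) $\Rightarrow$ (ii).} Assume $u\circ v\nsubseteq {\bf r}(0)$. For each $w\in (P:u\circ v)$ we have $u\circ v\circ w\subseteq P$. The 2-$nil$ property then gives $w\in (P:u)$ or $w\in (P:v)$. Hence $(P:u\circ v)$ is the union of the two hyperideals $(P:u\circ v)\cap(P:u)$ and $(P:u\circ v)\cap(P:v)$. The usual fact that an additive group cannot be the union of two proper subgroups forces $(P:u\circ v)$ to equal one of them, which is (ii).

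\textbf{(ii) $\Rightarrow$ (iii).} If $u\circ v\circ Q\subseteq P$ and $u\circ v\nsubseteq{\bf r}(0)$, then $Q\subseteq (P:u\circ v)$. By (ii) this gives $Q\subseteq(P:u)$ or $Q\subseteq(P:v)$, that is, $u\circ Q\subseteq P$ or $v\circ Q\subseteq P$.

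\textbf{(iii) $\Rightarrow$ (iv).} Assume $P_1\circ P_2\circ P_3\subseteq P$ and $P_1\circ P_2\nsubseteq {\bf r}(0)$. Then some $a\in P_1$, $b\in P_2$ have $a\circ b\nsubseteq{\bf r}(0)$. Here the hypothesis on ${\bf r}(0)$ enters. Elements of $P_1\circ P_2$ lie in finite sums of sets $x\circ y$, and these sums belong to $\mathfrak{C}(H)$. Since ${\bf r}(0)$ is a strong $\mathcal{C}$-hyperideal, such a sum is either contained in ${\bf r}(0)$ or disjoint from it. So the failure of $P_1\circ P_2\subseteq{\bf r}(0)$ is witnessed by a single product $a\circ b$.

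Next I argue by contradiction. Suppose $P_1\circ P_3\nsubseteq P$ and $P_2\circ P_3\nsubseteq P$. Choose $a'\in P_1$ with $a'\circ P_3\nsubseteq P$ and $b'\in P_2$ with $b'\circ P_3\nsubseteq P$. Apply (iii) with $Q=P_3$ to each of the pairs $(a,b)$, $(a+a',b)$, $(a,b+b')$, $(a+a',b+b')$, $(a',b')$, and so on, whenever the product is not inside ${\bf r}(0)$. Then use additivity of $x\mapsto x\circ P_3$ modulo $P$, in the sense that $(a+a')\circ P_3\subseteq a\circ P_3+a'\circ P_3$, to derive a contradiction, as in the classical proof for 2-absorbing ideals. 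The cases where a product such as $a'\circ b$ falls into ${\bf r}(0)$ are handled with the same trick: $a\circ b\nsubseteq {\bf r}(0)$ while $a'\circ b\subseteq{\bf r}(0)$ forces $(a+a')\circ b\nsubseteq{\bf r}(0)$. Here the strong $\mathcal{C}$-property is used again, to say that $(a+a')\circ b\subseteq a\circ b+a'\circ b$ meets the complement of ${\bf r}(0)$ and hence is not contained in it.

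\textbf{(iv) $\Rightarrow$ (i).} Given $u\circ v\circ w\subseteq P$, take $P_1=\langle u\rangle$, $P_2=\langle v\rangle$, $P_3=\langle w\rangle$. Then $P_1\circ P_2\circ P_3\subseteq P$, since every element is a sum of elements of $r\circ u\circ s\circ v\circ t\circ w$ with $r,s,t\in H$. Applying (iv) and using $u\in\langle u\rangle$, etc., gives $u\circ v\subseteq{\bf r}(0)$, $u\circ w\subseteq P$, or $v\circ w\subseteq P$.

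\textbf{Main obstacle.} I expect the case bookkeeping in (iii) $\Rightarrow$ (iv) to be the hardest part. It requires the strong $\mathcal{C}$-condition so that the statements ``$\nsubseteq{\bf r}(0)$'' behave additively, and the mixed sums must be checked carefully.
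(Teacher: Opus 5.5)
Your proposal follows the paper's route. It uses the same cycle (i)$\Rightarrow$(ii)$\Rightarrow$(iii)$\Rightarrow$(iv)$\Rightarrow$(i), the same element-wise case analysis in (iii)$\Rightarrow$(iv) (witnesses $a,b,a',b'$ and shifted elements $a+a'$, $b+b'$, as in the paper's Cases A--C), and the same passage to $\langle u\rangle,\langle v\rangle,\langle w\rangle$ for (iv)$\Rightarrow$(i). Your union-of-two-subgroups step in (i)$\Rightarrow$(ii) supplies something the paper leaves implicit; but (iii)$\Rightarrow$(iv) is only sketched. When you write it out, derive $(a+a')\circ P_3\nsubseteq P$ from the fact that $(P:P_3)$ is an additive subgroup, i.e.\ from $a'\circ p\subseteq (a+a')\circ p+(-a)\circ p$. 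The inclusion $(a+a')\circ P_3\subseteq a\circ P_3+a'\circ P_3$ that you (and the paper) cite points the wrong way for this deduction.
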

\begin{proof}
(i) $\Longrightarrow$ (ii) Let $P$ be a 2-$nil$ hyperideal of $H$ and $w \in  (P: u \circ v)$  for some $u,v \in H$ such that $u \circ v \nsubseteq {\bf r}(0)$. Therefore, $u \circ v \circ w \subseteq P$ which implies $u \circ w \subseteq P$ or $v \circ w \subseteq P$. Then, we get $w \in (P:u)$ or $w \in  (P:v)$. Thus, $(P: u \circ v) \subseteq (P:u)$ or $(P: u \circ v) \subseteq (P:v)$.

(ii) $\Longrightarrow$ (iii) Let $u \circ v \circ Q \subseteq P$ for $u,v \in H$ and a hyperideal $P$ of $H$ such that  $u\circ v \nsubseteq {\bf r}(0)$, $u \circ Q \nsubseteq P$ and $v \circ Q \nsubseteq P$. Then, $u \circ q_1 \nsubseteq P$ and $v \circ q_2 \nsubseteq P$ for some $q_1, q_2 \in Q$. On the other hand, $u\circ v \circ q_1 \subseteq P$ and $u\circ v \circ q_2 \subseteq P$. Hence, we have $u\circ v \nsubseteq {\bf r}(0)$ but neither  $(P: u \circ v) \subseteq (P:u)$ nor $(P: u \circ v) \subseteq (P:v)$, which is impossible.

(iii) $\Longrightarrow$ (iv) Suppose that $P_1 \circ P_2 \circ P_3 \subseteq P$ for hyperideals $P_1,P_2,P_3$ of $H$  such that $P_1 \circ P_2 \nsubseteq {\bf r}(0)$, $P_1 \circ P_3 \nsubseteq P$ and  $P_2 \circ P_3 \nsubseteq P$. This means that $a_1 \circ b_1 \nsubseteq {\bf r}(0)$, $a_2 \circ P_3 \nsubseteq P$ and $b_2 \circ P_3 \nsubseteq P$ for some $a_1,a_2 \in P_1$ and $b_1,b_2 \in P_2$. Therefore, we have $a_2 \circ b_2 \subseteq {\bf r}(0)$ as $a_2 \circ b_2 \circ P_3 \subseteq P$ but neither $a_2 \circ P_3 \subseteq P$ nor $b_2 \circ P_3 \subseteq P$.  Moreover, $a_1 \circ b_1 \circ P_3 \subseteq P$. Then, we get $a_1 \circ P_3 \subseteq P$ or $b_1 \circ P_3 \subseteq P$ as $a_1 \circ b_1 \nsubseteq {\bf r}(0)$. Then, we consider the following cases:

{\bf Case A:} If $a_1 \circ P_3 \subseteq P$ and $b_1 \circ P_3 \nsubseteq P$, then $a_2 \circ b_1 \subseteq {\bf r}(0)$ as $a_2 \circ b_1 \circ P_3 \subseteq P$ but neither $a_2 \circ P_3 \subseteq P$ nor $b_1 \circ P_3 \subseteq P$. Moreover, $(a_1+a_2) \circ P_3 \subseteq a_1 \circ P_3 +a \circ P_3 \nsubseteq P$ because $a_1 \circ P_3  \subseteq P$ and $a_2 \circ P_3 \nsubseteq P$. Then, we get $(a_1+a_2) \circ b_1 \subseteq {\bf r}(0)$ as $(a_1+a_2)\circ b_1 \circ P_3 \subseteq P$ and $b_1 \circ P_3 \nsubseteq P$. Since ${\bf r}(0)$ is a strong $\mathcal{C}$-hyperideal and $(a_1 \circ b_1 + a_2 \circ b_1) \cap {\bf r}(0) \neq \emptyset$, we obtain $a_1 \circ b_1 + a_2 \circ b_1 \subseteq {\bf r}(0)$ which implies $a_1 \circ b_1 \subseteq {\bf r}(0)$, a contradiction.

{\bf Case B:} Let $b_1 \circ P_3 \subseteq P$ and $a_1 \circ P_3 \nsubseteq P$. Then, by a similar argument to Case I, we get a contradiction.

{\bf Case C:} Assume that  $a_1 \circ P_3 \subseteq P$ and $b_1 \circ P_3 \subseteq P$. Then, $(b_1+b_2) \circ P_3 \subseteq b_1 \circ P_3+b_2 \circ P_3  \nsubseteq P$ as $b_2 \circ P_3 \nsubseteq P$. This implies that $a_2 \circ (b_1+b_2) \subseteq {\bf r}(0)$ as $a_2 \circ (b_1+b_2) \circ P_3 \subseteq P$ and $a_2 \circ P_3 \nsubseteq P$. Since ${\bf r}(0)$ is a strong $\mathcal{C}$-hyperideal and $(a_2 \circ b_1 + a_2 \circ b_2) \cap {\bf r}(0) \neq  \emptyset$, we obtain $a_2 \circ b_1 + a_2 \circ b_2 \subseteq {\bf r}(0)$ which implies $a_2 \circ b_1 \subseteq {\bf r}(0)$ as $a_2 \circ b_2 \subseteq   {\bf r}(0)$. Furthermore,    $(a_1+a_2) \circ P_3 \subseteq a_1 \circ P_3 +a_2 \circ P_3 \nsubseteq P$ as $a_1 \circ P_3  \subseteq P$ but $a_2 \circ P_3 \nsubseteq P$. Using the same reasoning as before, we conclude that $a_1 \circ b_2 \subseteq {\bf r}(0)$. Now, we get $(a_1+a_2) \circ (b_1+b_2) \subseteq {\bf r}(0)$ as $(a_1+a_2) \circ (b_1+b_2) \circ P_3 \subseteq P$ and neither $(a_1+a_2)  \circ P_3 \subseteq P$ nor $ (b_1+b_2) \circ P_3 \subseteq P$. Since ${\bf r}(0)$ is a strong $\mathcal{C}$-hyperideal and $(a_1 \circ b_1+ a_1 \circ b_2+ a_2 \circ b_1 + a_2 \circ b_2) \cap {\bf r}(0) \neq  \emptyset$, we obtain $a_1 \circ b_1+ a_1 \circ b_2+ a_2 \circ b_1 + a_2 \circ b_2 \subseteq {\bf r}(0)$ which implies $a_1 \circ b_1 \subseteq {\bf r}(0)$ as $ a_1 \circ b_2, a_2 \circ b_1, a_2 \circ b_2   \subseteq {\bf r}(0)$. Then we obtain a contradiction.

(iv) $\Longrightarrow$ (i) Assume that $u \circ v \circ w \subseteq P$ for $u,v,w \in H$. Then $\langle u \rangle \circ \langle v \rangle \circ \langle w \rangle \subseteq P$ by the proof of Proposition 3 in \cite{Ozel}. Put $P_1=\langle u \rangle$, $P_2= \langle v \rangle$ and $P_3= \langle w \rangle$. Then, $\langle u \rangle \circ \langle v \rangle \subseteq {\bf r}(0)$ or $\langle u \rangle \circ  \langle w \rangle \subseteq P$ or $  \langle v \rangle \circ \langle w \rangle \subseteq P$. Hence, we have $u \circ v \subseteq {\bf r}(0)$ or $u \circ w \subseteq P$ or $v \circ w \subseteq P$. Consequently, $P$ is a 2-$nil$ hyperideal of $H$.
\end{proof}
A commutative multiplicative  hyperring $H$ is hyperdomain if for all $u,v \in H$, $0 \in u \circ v$ implies that $u=0$ or $v=0$ \cite{f16}. 
\begin{theorem} \label{111}
Let $H$ be a hyperdomain. Then all 2-$nil$ hyperideals of $H$ are prime. 
\end{theorem}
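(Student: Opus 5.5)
The plan is to show first that in a hyperdomain ${\bf r}(0)=\{0\}$, and then to specialize the definition of a 2-$nil$ hyperideal with one of the three elements equal to $1_H$.

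\textbf{Step 1: ${\bf r}(0)=\{0\}$.} Let $h\in{\bf r}(0)$, so $h^n\subseteq\{0\}$ for some $n$; choose $n$ minimal. If $n=1$ then $h=0$. If $n\geq 2$, then $h^n=h\circ h^{n-1}$, so $0\in h\circ x$ for every $x\in h^{n-1}$. The hyperdomain property gives $h=0$ or $x=0$. If $h\neq 0$, this forces $h^{n-1}\subseteq\{0\}$, contradicting minimality. Hence $h=0$. More generally, a set $X\subseteq{\bf r}(0)$ means $X\subseteq\{0\}$. In particular, $u\circ v\subseteq{\bf r}(0)$ gives $0\in u\circ v$, hence $u=0$ or $v=0$.

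\textbf{Step 2: primeness.} Let $P$ be a 2-$nil$ hyperideal, and suppose $u\circ v\subseteq P$ with $u\notin P$; we must show $v\in P$. Following the trick of Lemma \ref{12}, I would use the triple $u,1_H,v$. Every element of $u\circ 1_H\circ v$ lies in some $x\circ v$ with $x\in u\circ 1_H$. I would argue that $u\circ 1_H\circ v= 1_H\circ(u\circ v)\subseteq P$, using associativity, commutativity, and the fact that $P$ absorbs products. The definition then gives one of three options:
\begin{enumerate}
\item $u\circ 1_H\subseteq{\bf r}(0)=\{0\}$. Since $u\in u\circ 1_H$, this forces $u=0\in P$, a contradiction.
\item $u\circ v\subseteq P$. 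This holds already, so this case is useless; I must reorder the triple.
\item $1_H\circ v\subseteq P$. Then $v\in 1_H\circ v\subseteq P$, and we are done.
\end{enumerate}

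To avoid the trivial case, I would order the triple as $1_H,v,u$. The 2-$nil$ condition then yields $1_H\circ v\subseteq{\bf r}(0)$, or $1_H\circ u\subseteq P$, or $v\circ u\subseteq P$, and the last is again automatic. So instead I would take the triple $u,v,1_H$. This gives $u\circ v\subseteq\{0\}$, or $u\circ 1_H\subseteq P$, or $v\circ 1_H\subseteq P$. The second option gives $u\in P$, which is excluded. The third gives $v\in P$. The first gives $u=0$ or $v=0$, so again $u\in P$ or $v\in P$. This resolves the case analysis, provided $u\circ v\circ 1_H\subseteq P$, which holds because $P$ is a hyperideal and $u\circ v\subseteq P$.

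\textbf{Obstacle.} The main subtlety is Step 1: handling the minimal exponent correctly and using the hyperdomain condition on elements of the set $h^{n-1}$. A secondary point is that the argument needs an identity $1_H$, as the paper implicitly assumes in Lemma \ref{12}. Finally, properness of $P$ is part of the definition of a 2-$nil$ hyperideal, so $P$ is prime.
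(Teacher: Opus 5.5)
Your proposal is correct and is essentially the paper's argument: show ${\bf r}(0)=\{0\}$ in a hyperdomain, then apply the 2-$nil$ condition to the triple $u,v,1_H$ with $u\circ v\circ 1_H\subseteq P$ to conclude $u\in P$ or $v\in P$. Your minimal-exponent proof that ${\bf r}(0)=\{0\}$, which the paper only asserts, is a useful addition, and the exploratory orderings $(u,1_H,v)$ and $(1_H,v,u)$ can simply be dropped from a final write-up.
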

\begin{proof}
Assume that $H$ is a   hyperdomain. Then, ${\bf r}(0)=\{0\}$. Let $P$ be a 2-$nil$ hyperideal  of $H$ that it is not prime. This implies that $u \circ v \subseteq P$ for some $u,v \in H \backslash P$. So, $u \circ v \notin {\bf r}(0)$. From $u \circ v \circ 1_H \subseteq P$ it follows that $u \in u \circ 1_H \subseteq P$ or $v \in v \circ 1_H \subseteq P$, which is a contradiction. Hence, every 2-$nil$ hyperideal  of $H$ is prime.
\end{proof}
A hyperring $H$ is   Noetherian if  every ascending chain $P_1 \subseteq  P_2 \subseteq P_3 \subseteq \ldots$  eventually stabilizes. As an example, all finite hyperrings are Noetherian \cite{Ozel}.
\begin{theorem} \label{112}
Assume that $H$ is a Noetherian hyperring and $0$   is a  strong $\mathcal{C}$-hyperideal  of $H$. If  $\vert \text{Min}_H(0) \vert \geq 3$, then all 2-$nil$ hyperideals of $H$ are prime. 
\end{theorem}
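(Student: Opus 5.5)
Let $P$ be a 2-$nil$ hyperideal of $H$. The plan is to show first that $P$ is not contained in ${\bf r}(0)$, and then to use the argument of Theorem \ref{111} with $1_H$ as the third element.

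\emph{Step 1: $P \nsubseteq {\bf r}(0)$.} Suppose instead that $P \subseteq {\bf r}(0)$. By Lemma \ref{12}, ${\bf r}(P)=P\cup{\bf r}(0)={\bf r}(0)$. I would then show that ${\bf r}(0)$ is itself 2-$nil$. Take $u\circ v\circ w\subseteq{\bf r}(0)$ with $u\circ v\nsubseteq{\bf r}(0)$.
\begin{enumerate}
\item Raising to a power and using that $0$ is a strong $\mathcal{C}$-hyperideal, we get $u^n\circ v^n\circ w^n\subseteq P$, exactly as in Theorem \ref{11}.
\item Choose $x\in u^n$ and $y\in v^n$ with $x\circ y\nsubseteq{\bf r}(0)$.
\item The 2-$nil$ property of $P$ gives $x\circ z\subseteq P$ or $y\circ z\subseteq P$.
\item The $\mathcal{C}$-property of ${\bf r}(0)$ (which I would derive from that of $0$) then gives $u^n\circ w^n\subseteq{\bf r}(0)$ or $v^n\circ w^n\subseteq{\bf r}(0)$.
\item Hence $u\circ w\subseteq{\bf r}(0)$ or $v\circ w\subseteq{\bf r}(0)$.
\end{enumerate}
So ${\bf r}(0)$ is 2-$nil$. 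By Theorem \ref{18}, $|\text{Min}_H(0)|\le 2$, which contradicts the hypothesis. Theorem \ref{18} only needs $0$ to be a $\mathcal{C}$-hyperideal, and a strong $\mathcal{C}$-hyperideal is a $\mathcal{C}$-hyperideal. The Noetherian hypothesis guarantees that $\text{Min}_H(0)$ is finite and that ${\bf r}(0)=\sqrt{0}$ is the intersection of these finitely many minimal primes, so the counting in Theorem \ref{18} is meaningful. Therefore $P\nsubseteq{\bf r}(0)$.

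\emph{Step 2: $P$ is prime.} Let $u\circ v\subseteq P$. Then $u\circ v\circ 1_H\subseteq P$.
\begin{itemize}
\item If $u\circ v\nsubseteq{\bf r}(0)$, the 2-$nil$ property gives $u\in u\circ1_H\subseteq P$ or $v\in v\circ 1_H\subseteq P$.
\item The hard case is $u\circ v\subseteq{\bf r}(0)$ with $u,v\notin P$.
\end{itemize}

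For the hard case, I would pick $p\in P\setminus{\bf r}(0)$, which exists by Step 1. I would then replace $u$ by $u+p$ and/or $v$ by $v+p$. We have $(u+p)\circ v\subseteq u\circ v+p\circ v\subseteq P$, and $u+p\notin P$.
\begin{itemize}
\item If $(u+p)\circ v\nsubseteq{\bf r}(0)$, the previous bullet gives $u+p\in P$ or $v\in P$, a contradiction.
\item Otherwise $(u+p)\circ v\subseteq{\bf r}(0)$ and $u\circ v\subseteq{\bf r}(0)$. The strong $\mathcal{C}$-property of ${\bf r}(0)$, transported from $0$, applied to the sum $u\circ v+p\circ v$ forces $p\circ v\subseteq{\bf r}(0)$.
\end{itemize}
Symmetrically, $u\circ p\subseteq{\bf r}(0)$ and $(u+p)\circ(v+p)\subseteq{\bf r}(0)$, and expanding yields $p\circ p\subseteq{\bf r}(0)$. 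Hence $p\in{\bf r}(0)$, a contradiction.

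The main obstacle is the bookkeeping in the hard case: justifying that ${\bf r}(0)$ inherits the (strong) $\mathcal{C}$-property from $0$, and handling the sums correctly. If this cancellation argument fails, the fallback is to use the three minimal primes $Q_1,Q_2,Q_3$ directly. One would build elements $u,v,w$ (for instance from products of elements outside the different $Q_i$) that violate the 2-$nil$ condition for any 2-$nil$ $P\subseteq{\bf r}(0)$.
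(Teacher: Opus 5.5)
Your proof is correct, and it takes a different route from the paper's.

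\textbf{The paper's route.} It writes ${\bf r}(0)=\bigcap_{i=1}^m Q_i$ over the (finitely many, presumably by Noetherianity) minimal primes. It then checks that particular non-prime candidates fail to be 2-$nil$: $\{0\}$ via Theorem \ref{16}, $Q_t\cap Q_s$ via Theorem \ref{110}(iv), and powers and mixed intersections of the $Q_t$. From this it concludes that 2-$nil$ hyperideals are prime, tacitly assuming these families exhaust the possibilities.

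\textbf{Your route.} You take an arbitrary 2-$nil$ $P$ and split on whether $P\subseteq{\bf r}(0)$. If so, your variant of Theorem \ref{11} makes ${\bf r}(0)$ 2-$nil$, contradicting Theorem \ref{18}; this variant uses the $\mathcal{C}$-property of $0$ and ${\bf r}(0)$ rather than that of $P$. If not, translating by $p\in P\setminus{\bf r}(0)$ and using the strong $\mathcal{C}$-property of ${\bf r}(0)$ forces $P$ to be prime.

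\textbf{What your route buys.} It handles every 2-$nil$ hyperideal rather than selected families. It yields a reusable lemma: if $0$ is a strong $\mathcal{C}$-hyperideal, any 2-$nil$ hyperideal not contained in ${\bf r}(0)$ is prime. It also never uses the Noetherian hypothesis, so your remark about Noetherianity and the appeal to Lemma \ref{12} in Step 1 can both be dropped.

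\textbf{The obstacle you flag is routine.} Suppose $t=x_1+\cdots+x_m\in\sum_i X_i$ with $X_i\in\mathcal{C}(H)$ and $t^k=\{0\}$. Repeated use of the distributive inclusion gives $0\in t^k\subseteq Z:=\sum_{(i_1,\dots,i_k)}X_{i_1}\circ\cdots\circ X_{i_k}\in\mathfrak{C}(H)$. Hence $Z=\{0\}$, and every $y\in\sum_i X_i$ satisfies $y^k\subseteq Z$. So ${\bf r}(0)$ is a strong $\mathcal{C}$-hyperideal; the case $m=1$ gives the plain $\mathcal{C}$-property you use in Step 1. This also supplies the strong $\mathcal{C}$ hypothesis that Theorem \ref{16} needs inside the proof of Theorem \ref{18}. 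Similarly, $p\circ p\subseteq{\bf r}(0)$ gives $0\in p^{2k}$ for some $k$, hence $p^{2k}=\{0\}$ and $p\in{\bf r}(0)$, which is the contradiction you need.
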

\begin{proof}
 Suppose that $\vert \text{Min}_H(0) \vert = m$ where $m \geq 3$. Then, $\{0\}$ is not a 2-$nil$ hyperideal of $H$ by Theorem \ref{16}. Moreover,  ${\bf r}(0)=\cap_{i=1}^m  Q_i$ such that $Q_1,\ldots, Q_m$ are distinct minimal prime hyperideals of $H$. Set, $A=Q_t \cap Q_s$ for $t \neq s$. Since $ Q_t \circ Q_s \circ H \subseteq A$ but neither $Q_t \circ Q_s \subseteq {\bf r}(0)$ nor $Q_t \circ H \subseteq A$ nor $Q_s \circ H \subseteq A$, we conclude that $A$ is not a 2-$nil$ hyperideal of $H$ by Theorem \ref{110}. Now, put $B=\underbrace{P_t\circ\cdots\circ P_t}_{m\text{ times}}$ where $t \in \{1,2,\ldots,m\}$ and $m>1$. Since $P_t \circ P_t^{m-1} \circ H \subseteq B$ but $P_t \circ P_t^{m-1} \nsubseteq {\bf r}(0)$, $P_t   \circ H \nsubseteq B$ and $  P_t^{m-1} \circ H \nsubseteq B$, $B$ is not a 2-$nil$ hyperideal of $H$. Similarly, one can demonstrate that $P_t \cap P_s^m$ is not a 2-$nil$ hyperideal of $H$. Hence, the 2-$nil$ hyperideals of $H$ coincide with its prime hyperideals. 
\end{proof}


A mapping $\theta$ from the multiplicative hyperring 
$(H_1, +_1, \circ _1)$ into the multiplicative hyperring $(H_2, +_2, \circ _2)$ is a hyperring good homomorphism if $\theta(u +_1 v) =\theta(u)+_2 \theta(v)$ and $\theta(u \circ_1 v) = \theta(u)\circ_2 \theta(v)$ for all $u,v \in H_1$ \cite{f10}.

\begin{theorem} \label{homo} 
Assume that $H_1$ and $H_2$ are two multiplicative hyperrings such that $\theta: H_1 \longrightarrow H_2$ is a hyperring
good homomorphism. Then the following statements hold:
\begin{itemize}
\item[\rm{(i)}]~ If $P_2$ is a 2-$nil$ hyperideal of $H_2$ and $\theta$ is injective, then $\theta^{-1}(P_2)$ is a 2-$nil$   hyperideal of $H_1$.
\item[\rm{(ii)}]~ If   $P_1$ is a  2-$nil$ $\mathcal{C}$-hyperideal   of $H_1$  and $\theta$ is surjective with $Ker (\theta) \subseteq P_1$, then $\theta(P_1)$ is a 2-$nil$ $\mathcal{C}$-hyperideal   of $ H_2$.
\end{itemize}
\end{theorem}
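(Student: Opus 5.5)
For part (i) I will first check that $\theta^{-1}(P_2)$ is a proper hyperideal of $H_1$. Closure under subtraction follows from additivity of $\theta$. Absorption follows from $\theta(h\circ_1 p)=\theta(h)\circ_2\theta(p)\subseteq P_2$. Properness holds because $1_{H_1}\in\theta^{-1}(P_2)$ would give $\theta(1_{H_1})\in P_2$; I will handle this case separately (if $\theta^{-1}(P_2)=H_1$ there is nothing to check, or one uses $\theta(1_{H_1})$ acting as an identity on $\theta(H_1)$).

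Now let $u\circ_1 v\circ_1 w\subseteq\theta^{-1}(P_2)$. Applying $\theta$ gives
\[
\theta(u)\circ_2\theta(v)\circ_2\theta(w)=\theta(u\circ_1v\circ_1w)\subseteq P_2 .
\]
By the 2-$nil$ property of $P_2$, either $\theta(u)\circ_2\theta(w)\subseteq P_2$, or $\theta(v)\circ_2\theta(w)\subseteq P_2$, or $\theta(u)\circ_2\theta(v)\subseteq{\bf r}(0_{H_2})$. The first two cases pull back immediately to $u\circ_1 w\subseteq\theta^{-1}(P_2)$ or $v\circ_1 w\subseteq\theta^{-1}(P_2)$.

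In the third case, take $x\in u\circ_1 v$. Then $\theta(x)$ is nilpotent, so $0\in\theta(x)^n=\theta(x^n)$ for some $n$. Hence some $y\in x^n$ satisfies $\theta(y)=0=\theta(0)$, and injectivity forces $y=0$. Thus $0\in x^n$, so $x\in{\bf r}(0_{H_1})$, and therefore $u\circ_1 v\subseteq{\bf r}(0_{H_1})$. This is the only place injectivity is needed.

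For part (ii), I will first show that $\theta(P_1)$ is a hyperideal of $H_2$. Differences are handled by additivity. For absorption, given $h_2=\theta(h)$ by surjectivity, we have $h_2\circ_2\theta(p)=\theta(h\circ_1p)\subseteq\theta(P_1)$.

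The key pullback fact is this: if $\theta(a)\in\theta(P_1)$, then $a-p\in\operatorname{Ker}\theta\subseteq P_1$ for some $p\in P_1$, so $a\in P_1$. Hence $\theta^{-1}(\theta(P_1))=P_1$. Properness follows, since $\theta(P_1)=H_2$ would give $P_1=H_1$.

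The $\mathcal{C}$-property transfers as follows. Any $X\in\mathcal{C}(H_2)$ equals $\theta(Y)$ for some $Y=h_1\circ\cdots\circ h_n\in\mathcal{C}(H_1)$, again by surjectivity. If $X$ meets $\theta(P_1)$, then $Y$ meets $\theta^{-1}\theta(P_1)=P_1$. So $Y\subseteq P_1$, and hence $X\subseteq\theta(P_1)$.

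For the 2-$nil$ condition, write the elements as $\theta(u),\theta(v),\theta(w)$ with $\theta(u)\circ\theta(v)\circ\theta(w)\subseteq\theta(P_1)$. Pulling back gives $u\circ v\circ w\subseteq P_1$. The 2-$nil$ property of $P_1$ then yields $u\circ w\subseteq P_1$, or $v\circ w\subseteq P_1$, or $u\circ v\subseteq{\bf r}(0_{H_1})$. Applying $\theta$ handles every case, since $\theta({\bf r}(0_{H_1}))\subseteq{\bf r}(0_{H_2})$: from $0\in x^n$ we get $0\in\theta(x)^n$.

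The main obstacle I expect is the careful bookkeeping of set images under $\theta$, namely justifying $\theta(A\circ B)=\theta(A)\circ\theta(B)$ for the iterated products and the identity $\theta^{-1}\theta(P_1)=P_1$. The $\mathcal{C}$-hypothesis on $P_1$ is used only to make $\theta(P_1)$ a $\mathcal{C}$-hyperideal, not for the 2-$nil$ transfer itself.
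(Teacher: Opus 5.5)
Your argument follows the paper's proof in both parts. In (i) you pull back through the three alternatives. In (ii) you use $\mathrm{Ker}(\theta)\subseteq P_1$ to get $\theta^{-1}(\theta(P_1))=P_1$, pull back $u\circ v\circ w$, apply the 2-$nil$ property and push forward. It is also more complete than the paper's: you prove $\theta^{-1}({\bf r}(0_{H_2}))\subseteq{\bf r}(0_{H_1})$, which the paper only asserts, and you prove the $\mathcal{C}$-transfer directly where the paper cites \cite{Sen}.

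One step fails: properness in (i). Your fallback ``if $\theta^{-1}(P_2)=H_1$ there is nothing to check'' is wrong. A 2-$nil$ hyperideal is proper by definition, so in that case the conclusion is false, not vacuous. Nor does $\theta(1_{H_1})$ acting as an identity on $\theta(H_1)$ keep it out of $P_2$. The case really occurs. Take singleton hyperoperations and $\theta:\mathbb{Z}\to\mathbb{Z}\times\mathbb{Z}$, $n\mapsto(n,0)$. This is an injective good homomorphism, and $P_2=\mathbb{Z}\times\{0\}$ is prime, hence 2-$nil$. Yet $\theta^{-1}(P_2)=\mathbb{Z}$. So (i) is false as stated; the paper's proof never checks properness either. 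You must add a hypothesis such as $\theta(1_{H_1})\notin P_2$, equivalently $\theta^{-1}(P_2)\neq H_1$. This holds automatically when $\theta(1_{H_1})$ is an identity of $H_2$, since a hyperideal containing an identity is the whole hyperring.

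A smaller point concerns ${\bf r}(0)$. The paper defines ${\bf r}(P)=\{h : h^n\subseteq P \text{ for some } n\}$, so ${\bf r}(0)$ consists of the $h$ with $h^n\subseteq\{0\}$, not those with $0\in h^n$. Hence your step ``$0\in x^n$, so $x\in{\bf r}(0_{H_1})$'' does not follow as written. The repair is immediate. From $\theta(x^n)=\theta(x)^n\subseteq\{0\}$, \emph{every} $y\in x^n$ satisfies $\theta(y)=0=\theta(0)$. Injectivity gives $y=0$, so $x^n\subseteq\{0\}$. Likewise in (ii), $x^n\subseteq\{0\}$ gives $\theta(x)^n=\theta(x^n)\subseteq\{0\}$. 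With these two corrections the rest of your proof goes through, including the properness of $\theta(P_1)$ via $\theta^{-1}(\theta(P_1))=P_1$.
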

\begin{proof}
(i) Let $ u \circ_1 v \circ_1 w  \subseteq \theta^{-1}(P_2)$ for some $u,v,w \in H_1$. Then we get $ \theta(u \circ_1 v \circ_1  w)=\theta(u) \circ_2 \theta(v) \circ_2 \theta(w) \subseteq P_2$ as $\theta$ is a good homomorphism. Since $P_2$ is a 2-$nil$ hyperideal of $H_2$, we obtain $\theta(u \circ_1 v)=\theta(u) \circ_2  \theta(v) \subseteq {\bf r}(0_{H_2})$ or $\theta(u \circ_1 w)=\theta(u) \circ_2  \theta(w) \subseteq P_2$ or $\theta(v \circ_1 w)=\theta(u) \circ_2  \theta(w) \subseteq P_2$. By the hypothesis, we have $\theta^{-1}({\bf r}(0_{H_2})) \subseteq {\bf r}(0_{H_1})$. Therefore,  $u \circ_1 v \subseteq {\bf r}(0_{H_1})$ or $u \circ_1 w \subseteq \theta^{-1}(P_2)$ or $v \circ_1 w \subseteq \theta^{-1}(P_2)$. Consequently, $\theta^{-1}(P_2)$ is a 2-$nil$   hyperideal of $H_1$.

(ii) Since $P_1$ is a   $\mathcal{C}$-hyperideal   of $H_1$, $\theta(P_1)$ is a   $\mathcal{C}$-hyperideal   of $H_2$ by Proposition 2.8 (i) in \cite{Sen}. Now, let $u \circ_2 v \circ_2 w  \subseteq \theta(P_1)$ for $u,v,w \in H_2$. Then there exists $a,b,c \in H_1$ such that $\theta(a)=u$,  $\theta(b)=v$ and $\theta(c)=w$ as $\theta$ is surjective. Therefore, we have  $\theta(a \circ_1 b \circ_1 c)=\theta(a) \circ_2 \theta(b) \circ_2 \theta(c)\subseteq \theta(P_1)$. Now, pick any $s \in a \circ_1 b \circ_1 c$. Then $\theta(s) \in \theta(a \circ_1 b \circ_1 c) \subseteq \theta(P_1)$ and so there exists $t \in P_1$ such that $\theta(s)=\theta(t)$. Then we get $\theta(s-t)=0$ which means $s-t \in Ker (\theta)\subseteq P_1$ and so $s \in P_1$. Therefore $a \circ_1 b \circ_1 c \subseteq P_1$ as $P_1$ is a $\mathcal{C}$-hyperideal. Since $P_1$ is a 2-$nil$ hyperideal of $H_1$, we get   $a \circ_1 b    \subseteq {\bf r}(0_{H_1})$ or $a \circ_1 c    \subseteq P_1$ or $b \circ_1 c    \subseteq P_1$. Hence, $\theta(a \circ_1 b)=u \circ v     \subseteq {\bf r}(0_{H_2})$ or $\theta(a \circ_1 c)=u \circ w     \subseteq \theta(P_1)$ or $\theta(b \circ_1 c)=v \circ w    \subseteq \theta(P_1)$. Thus, $\theta(P_1)$ is a 2-$nil$ $\mathcal{C}$-hyperideal   of $ H_2$.
\end{proof}
Now, we have the following result.
\begin{corollary}
Let $P$ be a proper hyperideal of $H$ containing the hyperideal $I$. If $P$ is a 2-nil hyperideal of $H$, then $P/I$ is a 2-$nil$ hyperideal of $H/I$. 
\end{corollary}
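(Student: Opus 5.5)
The plan is to obtain the corollary from Theorem \ref{homo}(ii), applied to the canonical projection
\[
\pi : H \longrightarrow H/I, \qquad \pi(h) = h + I .
\]

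\textbf{Step 1: check the hypotheses on $\pi$.} With the usual quotient structure, $(a+I)\circ(b+I) = \{c + I \mid c \in a\circ b\}$. Hence $\pi$ is a good homomorphism, and it is clearly surjective. Its kernel is $I$, and $I \subseteq P$ by assumption. So $\pi(P) = P/I$, and Theorem \ref{homo}(ii) gives that $P/I$ is a 2-$nil$ hyperideal of $H/I$.

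\textbf{Step 2: the missing $\mathcal{C}$-hyperideal hypothesis.} Theorem \ref{homo}(ii) requires $P$ to be a $\mathcal{C}$-hyperideal, which the corollary does not assume. Its only use in that proof is to lift the inclusion $u\circ v\circ w\subseteq P/I$ to $a\circ b\circ c\subseteq P$. For the quotient map this lift is automatic:
\begin{itemize}
\item $(a+I)\circ(b+I)\circ(c+I)\subseteq P/I$ means that $s + I \in P/I$ for each $s \in a\circ b\circ c$;
\item so $s - t \in I \subseteq P$ for some $t \in P$, which gives $s \in P$.
\end{itemize}
No $\mathcal{C}$-condition is needed. I would therefore run the argument of Theorem \ref{homo}(ii) directly, rather than cite it verbatim.

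\textbf{Step 3: the 2-$nil$ condition.} Since $P$ is 2-$nil$, we get $a\circ b\subseteq {\bf r}(0)$, or $a\circ c\subseteq P$, or $b\circ c\subseteq P$. The last two cases map immediately to $u\circ w \subseteq P/I$ and $v\circ w\subseteq P/I$.

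For the first case, I need $\pi({\bf r}(0_H))\subseteq {\bf r}(0_{H/I})$. Take $x\in a\circ b$. Then $x^n \subseteq {\bf r}(0)$, or $0 \in x^n$, depending on which definition of nilpotence is in force. In either case $\pi(x)^n = \pi(x^n)$ contains $0+I$, or lies in $\pi({\bf r}(0))$. Since ${\bf r}(0)$ consists of elements with $x^n\subseteq 0$, it follows that $(x+I)^n \subseteq \{0+I\}$. Thus $\pi(x)\in {\bf r}(0_{H/I})$, and so $u\circ v\subseteq {\bf r}(0_{H/I})$.

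Properness of $P/I$ follows from $P\neq H$ together with $I\subseteq P$: if $P/I = H/I$, then every $h \in H$ satisfies $h - t \in I \subseteq P$ for some $t \in P$, forcing $h \in P$.

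\textbf{Expected obstacle.} The main point to watch is Step 2: confirming that the $\mathcal{C}$-hyperideal hypothesis of Theorem \ref{homo}(ii) is really dispensable here. The argument above shows this, because $I \subseteq P$ handles the preimage issue directly. The remaining steps are routine.
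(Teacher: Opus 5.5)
Your proof is correct and is essentially the paper's argument, which simply applies Theorem~\ref{homo}(ii) to the canonical good epimorphism $\pi:H\to H/I$. Your Step~2 improves on the paper. The corollary does not assume $P$ is a $\mathcal{C}$-hyperideal, yet the paper's citation of Theorem~\ref{homo}(ii) formally needs it; as you observe, $\ker\pi=I\subseteq P$ alone lifts the inclusion to $a\circ b\circ c\subseteq P$. Your check that $\pi({\bf r}(0_H))\subseteq{\bf r}(0_{H/I})$ also fills a step the paper leaves implicit; it is stated more cleanly as: $x^n\subseteq\{0\}$ gives $\pi(x)^n=\pi(x^n)\subseteq\{0+I\}$.
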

\begin{proof}
Consider the homomorphism $\pi :H \longrightarrow H/I$ defined by $\pi(u)=u+I$. Now, the claim follows from Theorem \ref{homo} (ii) as $\pi$ is a good epimorphism. 
\end{proof}
\section{Future work} 
In \cite{Qaralleh}, the notion of 2-$nil$ primary ideals was proposed by Qaralleh et al. We plan to introduce the notion of 2-$nil$  primary hyperideals of a commutative multiplicative hyperring $H$ as a generalization of 2-$nil$  hyperideals in futur work. 
\begin{definition}
Let $P$ be a proper hyperideal of $H$. Then,  $P$  is called a 2-$nil$  primary hyperideal if $u \circ v \circ w \subseteq P$ for $u,v,w \in H$   implies that $u \circ v \subseteq {\bf r}(0)$ or $u \circ w \subseteq {\bf r}(P)$ or $v \circ w  \subseteq {\bf r}(P)$.
\end{definition}










\begin{thebibliography}{99}
\bibitem{ameri}
R. Ameri, A. Kordi, S. Hoskova-Mayerova, Multiplicative hyperring of fractions and coprime hyperideals, {\it An. St. Univ. ovidious Constanta,}  {\bf 25}(1) (2017) 5-23.









\bibitem{anb4}
M. Anbarloei, A generalization of prime hyperideals, {\it Journal of Algebraic Systems}, {\bf 8} (1) (2020) 113-127.




















\bibitem{Celikel}
E.Y. Celikel, 2-$nil$ ideals of commutative rings, {\it Bull. Belg. Math. Soc. Simon Stevin,} {\bf  28} (2021)  295-304.



\bibitem{Ciampi}
R.P. Ciampi, R. Rota, Polynomials over multiplicative hyperrings, {\it J.
Discrete Math. Sci. Cryptogr.}, {\bf 6} (2-3) (2003) 217–225.




\bibitem{f2}
P. Corsini, V. Leoreanu, Applications of hyperstructures theory, {\it Adv. Math.,} Kluwer Academic Publishers, 2003.


\bibitem{f4}
I. Cristea, S. Jancic-Rasovic, Compositions hyperrings, {\it An. Stiint. Univ. Ovidius Constanta Ser. Mat.,} {\bf 21}(2) (2013) 81-94.


\bibitem{das}
U. Dasgupta, On prime and primary hyperideals of a multiplicative hyperring, {\it Annals of the Alexandru Ioan Cuza
University-Mathematics,} LVIII (1) (2012) 19-36.

\bibitem{phd} 
U. Dasgupta, On certain classes of hypersemirings, PhD Thsis, University of
Calcutta, 2012.




\bibitem{f10}
B. Davvaz, V. Leoreanu-Fotea, Hyperring theory and applications, {\it International Academic
Press}, USA, 2007.
  

 

 
 
 
 
 

 
 


\bibitem{marty}
F. Marty, Sur une generalization de la notion de groupe, In: {\it 8iem Congres des Mathematiciens Scandinaves, Stockholm}, (1934) 45-49.

\bibitem{f7}
C. G. Massouros, On the theory of hyperrings and hyperfields, {\it Algebra i Logika,} {\bf 24} (1985) 728-742.




\bibitem{Ghiasvand2}
P. Ghiasvand, F. Farzalipour, 1-Absorbing prime avoidance theorem  in multiplicative hyperrings, {\it  Palestine Journal of Mathematics,} {\bf 12}(1) (2023) 900-908.






\bibitem{Ozel}
E. Özel Ay, G. Yeşilot,  On (2,$n$)-hyperideals of commutative multiplicative hyperrings, (2026) https://doi.org/10.1142/S0219498828500272



\bibitem{f16}
R. Procesi, R. Rota, On some classes of hyperstructures, {\it Combinatorics
Discrete Math.,} 208/209 (1999) 485-497.

\bibitem{Qaralleh}
I. Qaralleh, S. Al-Kaseasbeh, E.Y. Celikel, On 2-$nil$ primary ideals of commutative rings, {\it An. St. Univ. Ovidius Constanta,}{\bf 34}(1) (2026) 195-208.

\bibitem{f14}
R. Rota, Sugli iperanelli moltiplicativi, {\it Rend. Di Math., Series} VII, {\bf 4} (1982) 711-724.

\bibitem{Sen}
E. Sengelen Sevim, B.A. Ersoy,  B. Davvaz, Primary hyperideals of multiplicative hyperrings, {\it Eurasian Bulletin of Mathematics}, {\bf 1}(1) (2018) 43-49.

\bibitem{Suzen}
N. Süzen, G. Yesilot, On 2-absorbing primary hyperideals of multiplicative hyperrings. arXiv preprint arXiv:1803.09921.

\bibitem{ul}
G. Ulucak, On expansions of prime and 2-absorbing hyperideals in multiplicative hyperrings, {\it Turkish Journal of Mathematics,} {\bf 43} (2019) 1504-1517.

\bibitem{Ugurlu}
E. A. Ugurlu,   A note on $n$-hyperideals of hyperrings, {\it   Southeast Asian Bulletin of Mathematical Sciences,} {\bf 48}(3) (2024) 311-320.

\bibitem{f9}
T. Vougiouklis, Hyperstructures and their representations, Hadronic Press, Inc., Palm Harber, USA, 1994.





\end{thebibliography}
\end{document}